\crefname{hypothesis}{Hypothesis}{Hypotheses}
\title{Error bounds for Lanczos-based matrix function approximation%
\thanks{
\textbf{Funding:} This material is based on work supported by the National Science Foundation under Grant Nos. DGE-1762114, CCF-2045590, and CCF-2046235 and by an Adobe Research grant. Any opinions, findings, and conclusions or recommendations expressed in this material are those of the authors and do not necessarily reflect the views of the National Science Foundation.}}
\author{Tyler Chen\thanks{University of Washington, \href{mailto:chentyl@uw.edu}{\texttt{chentyl@uw.edu}}}
\and Anne Greenbaum\thanks{University of Washington, \href{mailto:greenbau_uw.edu}{\texttt{greenbau@uw.edu}}}
\and Cameron Musco\thanks{University of Massachusetts Amherst, \href{mailto:cmusco@cs.umass.edu}{\texttt{cmusco@cs.umass.edu}}} 
\and \linebreak[4] Christopher Musco\thanks{New York University, \href{mailto:cmusco@nyu.edu}{\texttt{cmusco@nyu.edu}}}
}
\crefname{section}{Section}{Sections}
\crefname{ineq}{}{}
\newcommand{\Tyler}[1]{\textcolor{red}{Tyler: #1}}
\newcommand{\Anne}[1]{\textcolor{green}{Anne: #1}}
\DeclareMathOperator*{\argmin}{argmin}
\newcommand{\R}{\mathbb{R}}
\renewcommand{\Re}{\operatorname{Re}}
\renewcommand{\Im}{\operatorname{Im}}
\renewcommand{\d}[1]{\ensuremath{\mathrm{d}#1}}
\renewcommand{\vec}{\mathbf}
\newcommand{\T}{\textup{\textsf{T}}}
\newcommand{\cT}{\textup{\textsf{H}}}
\newcommand{\lan}{\textup{\textsf{lan}}}
\newcommand{\err}{\textup{\textsf{err}}}
\newcommand{\Res}{\textup{\textsf{res}}}
\newcommand{\rev}[2]{{\color{red}\sout{}}{\color{blue}#2}}
\begin{document}

\maketitle

\begin{abstract}
We analyze the Lanczos method for matrix function approximation (Lanczos-FA), an iterative algorithm for computing \( f(\vec{A}) \vec{b}\) when \(\vec{A}\) is a  Hermitian matrix and \(\vec{b}\) is a given vector. 
Assuming that \( f : \mathbb{C} \rightarrow \mathbb{C} \) is piecewise analytic, we give a framework, based on the Cauchy integral formula, which can be used to derive a priori and a posteriori error bounds for Lanczos-FA in terms of the error of Lanczos used to solve linear systems.
Unlike many error bounds for Lanczos-FA, these  bounds account for fine-grained properties of the spectrum of \( \vec{A} \), such as clustered or isolated eigenvalues.
Our results are derived assuming exact arithmetic, but we show that they are easily extended to finite precision computations using existing theory about the Lanczos algorithm in finite precision.
We also provide generalized bounds for the Lanczos method used to approximate quadratic forms \( \vec{b}^\cT f(\vec{A}) \vec{b} \), and demonstrate the effectiveness of our bounds with numerical experiments.
\end{abstract}

\begin{keywords}
Matrix function approximation, Lanczos, Krylov subspace method
\end{keywords}

\begin{AMS}
65F60, 
65F50, 
68Q25 
\end{AMS}

\section{Introduction}

Computing the product of a matrix function
\( f(\vec{A}) \) with a  vector \( \vec{b} \), where \( \vec{A} \) is a Hermitian matrix and \( f : \mathbb{C} \to \mathbb{C} \) is a scalar function, is a fundamental task in numerical linear algebra.
Perhaps the most well known example is \( f(x) = 1/x \), in which case \( f(\vec{A}) \vec{b}  = \vec{A}^{-1} \vec{b} \) is the solution to the linear system of equations \( \vec{A} \vec{x} = \vec{b} \).
Other common functions include the exponential, logarithm, square root, inverse square root, and sign function, which have applications in solving  
differential equations \cite{druskin_knizhnerman_89, saad_92}, 
Gaussian process sampling \cite{pleiss_jankowiak_eriksson_damle_gardner_20},
principal component projection and regression \cite{allen_zhu_li_17,frostig_musco_musco_sidford_16,jin_sidford_19},
lattice quantum chromodynamics \cite{davies_higham_05,eshof_frommer_lippert_schilling_van_der_vorst_02},
eigenvalue counting/spectrum approximation \cite{braverman_krishnan_musco_21,chen_trogdon_ubaru_21,di_napoli_polizzi_saad_16}, and beyond \cite{higham_08}.

A common approach to approximating \( f(\vec{A})\vec{b} \) is based on the Lanczos algorithm.
The Lanczos algorithm, shown in \cref{alg:lanczos}, iteratively constructs an orthonormal basis \( \vec{Q}_k = [ \vec{q}_1 , \ldots , \vec{q}_k ]\) for a nested sequence of  Krylov subspaces,
\begin{align*}
\mathcal{K}_k(\vec{A},\vec{b})
= \operatorname{span}( \vec{b}, \vec{A}\vec{b}, \ldots, \vec{A}^{k-1}\vec{b} )
= \{ p(\vec{A}) \vec{b} : \deg(p) < k \},
\end{align*}
such that \( \operatorname{span}(\vec{q}_1, \ldots, \vec{q}_j) = \mathcal{K}_j(\vec{A},\vec{b}) \) for all \( j\leq k\).
The basis \( \vec{Q}_k \) satisfies a three-term recurrence 
\begin{align}
    \label{eqn:lanczos_factorization}
    \vec{A} \vec{Q}_k = \vec{Q}_k \vec{T}_k + \beta_k \vec{q}_{k+1} \vec{e}_k^{\T},
\end{align}
where \( \vec{T}_k \) is a real symmetric tridiagonal matrix with entries
\begin{align*}
    \vec{T}_k
    = \begin{bmatrix}
        \alpha_1 & \beta_1 \\
        \beta_1 & \alpha_2 & \ddots \\
        & \ddots & \ddots & \beta_{k-1}\\
        & & \beta_{k-1} & \alpha_k
    \end{bmatrix}.
\end{align*}

The Lanczos method for matrix function approximation, which we refer to as Lanczos-FA, approximates \( f(\vec{A})\vec{b} \) using   \( \vec{Q}_k \) and \( \vec{T}_k \) as follows: 
\begin{definition}
The \( k \)-th Lanczos-FA approximation to \( f(\vec{A}) \vec{b} \) is defined as
\begin{align*}
    \lan_k(f,\vec{A},\vec{b}) 
    := \vec{Q}_k f(\vec{T}_k) \vec{Q}_k^{\cT} \vec{b},
\end{align*}
where \( \vec{Q}_k \) and \( \vec{T}_k \) are produced by the Lanczos method run for \( k \) steps on \( (\vec{A},\vec{b}) \).
For simplicity, we often write \( \lan_k(f) \), since \( \vec{A} \) and \( \vec{b} \) remain fixed for most of this manuscript.
If we are considering the Lanczos algorithm run on a matrix or right hand side different from the given \( \vec{A} \) or \( \vec{b} \), we will use the full notation.
\end{definition}

\begin{center}
\begin{minipage}[t]{.75\textwidth}
    \begin{algorithm}[H]
\caption{Lanczos}\label{alg:lanczos}
\fontsize{10}{10}\selectfont
\begin{algorithmic}[1]
\Procedure{Lanczos}{$\vec{A}, \vec{b}, k$}
\State \( \vec{q}_0 = \vec{0} \),
\( \beta_{0} = 0 \),
\( \vec{q}_1  = \vec{b} / \| \vec{b} \| \)
\For {\( j=1,2,\ldots,k \)}
    \State \( \tilde{\vec{q}}_{j+1} = \vec{A} \vec{q}_{j} - \beta_{j-1} \vec{q}_{j-1} \)
    \State \( \alpha_j = \langle \tilde{\vec{q}}_{j+1}, \vec{q}_j \rangle \)
    \State \( \tilde{\vec{q}}_{j+1} = \tilde{\vec{q}}_{j+1} - \alpha_j \vec{q}_j \)
        \State optionally, reorthogonalize\footnotemark\,  \( \tilde{\vec{q}}_{j+1} \) against \( \{\vec{q}_i\}_{i=1}^{j-1} \)
    \State \( \beta_{j} = \| \tilde{\vec{q}}_{j+1} \| \)
    \State \( \vec{q}_{j+1} = \tilde{\vec{q}}_{j+1} / \beta_{j} \)
\EndFor
\State \Return \( \vec{Q}_k \), \( \vec{T}_k \)
\EndProcedure
\end{algorithmic}
\end{algorithm}

\end{minipage}
\vspace{1.5em}
\end{center}

\footnotetext{Note that reorthogonalization has no effect on the algorithm in exact arithmetic, but can in finite precision. We discuss finite precision considerations in \cref{sec:finite_precision}.}

We would like to understand the convergence behavior of Lanczos-FA through a priori and a posteriori error bounds.
In the context of Krylov subspace methods for symmetric matrices, a priori bounds depend on the spectrum of \( \vec{A} \) but not on the choice of right hand side \( \vec{b} \) \cite{greenbaum_97}.
As such, a priori bounds are used to provide intuition about how an algorithm depends on the spectrum of the input.
On the other hand, a posteriori bounds typically depend on quantities which are accessible to the user, but not on quantities which are unknown in practice.
This means a posteriori bounds for Lanczos-FA can depend on quantities such as the output of the Lanczos algorithm $\vec{Q}_k$ and $\vec{T}_k$ but not on the spectrum of $\vec{A}$.

\subsection{Polynomial error bounds for Lanczos-FA}
\label{sec:polynomial_bounds}

It is easy to show that \( \lan_k(p) = p(\vec{A})\vec{b} \) for any polynomial \( p \) with \( \deg p < k \); see for example \cite{druskin_knizhnerman_89,saad_92}.
This implies that \( \lan_k(f) = p_{k}(\vec{A})\vec{b} \), where \( p_{k} \) is the degree \(k-1\) polynomial interpolating \( f \) at the eigenvalues of \( \vec{T}_k \). Since eigenvalues of \( \vec{A} \) are often approximated by eigenvalues of \( \vec{T}_k \), this interpolating polynomial is a sensible approximation.

More formally, let \( \| \cdot \| \) be any norm induced by a positive definite matrix which commutes with $\vec{A}$; i.e. with the same eigenvectors as $\vec{A}$. Such norms include the 2-norm, the $\vec{A}^2$-norm, and the $\vec{A}$-norm (if $\vec{A}$ is positive definite).
Then \( \| g(\vec{A}) \vec{v} \| \leq \| g(\vec{A}) \|_2 \cdot \| \vec{v} \| \) for any \( g : \mathbb{R}\to\mathbb{R} \), so by the triangle inequality, for any \( p \) with \( \deg p < k \),
\begin{align*}
\| f(\vec{A}) \vec{b} - \lan_k(f) \|
& \leq \| f( \vec{A} ) \vec{b} - p( \vec{A} ) \vec{b} \| + \| p( \vec{A} ) \vec{b} - \lan_k (p) \| + \| \lan_k (p) - \lan_k (f) \| 
\\& = \| (f( \vec{A} ) - p( \vec{A} ) )\vec{b} \| + 0 + \| \vec{Q}_k ( p( \vec{T}_k ) - f( \vec{T}_k ) ) \vec{Q}_k^{\cT} \vec{b} \| 
\\& \le \| f( \vec{A} ) - p( \vec{A} ) \|_2 \cdot \| \vec{b} \| + \| \vec{Q}_k ( p( \vec{T}_k ) - f( \vec{T}_k ) ) \vec{Q}_k^{\cT} \|_2 \cdot \|\vec{b} \|
\\& \le \left( \| f( \vec{A} ) - p( \vec{A} ) \|_2 + \| p(\vec{T}_k ) - f( \vec{T}_k ) \|_2 \right) \cdot \| \vec{b} \| . 
\end{align*}
Denote the infinity norm of a scalar function \( h:\R\to\R \) over \( S\subset \R \) by \( \|h\|_S  := \sup_{x\in S}| h(x)| \).
Then, writing the set of eigenvalues of a Hermitian matrix \( \vec{B} \) as \( \Lambda(\vec{B}) \),
\begin{align}
\| f(\vec{A}) \vec{b} - \lan_k(f) \|
    &\leq \min_{\deg p< k} \left( \| f-p \|_{\Lambda(\vec{A})} + \| f - p \|_{\Lambda(\vec{T}_k)} \right) \| \vec{b} \|. \label[ineq]{eqn:triangle_ineq1}
\end{align}
Finally, introducing the notation \(\mathcal{I}(\vec{B}) := [\lambda_{\text{min}}(\vec{B}), \lambda_{\text{max}}(\vec{B})]\) and using the fact that \( \Lambda(\vec{T}_k) \subset \mathcal{I}(\vec{A}) \), we obtain the classic bound
\begin{align}
\label[ineq]{eqn:poly_unif}
\| f(\vec{A}) \vec{b} - \lan_k(f) \|_2 
    &\leq  2 \min_{\deg p<k} \left( \| f-p \|_{\mathcal{I}(\vec{A})}  \right) \| \vec{b} \|_2.
\end{align}
That is, except for a possible factor of \( 2 \), the error of the Lanczos-FA approximation to \( f( \vec{A} ) \vec{b} \) is at least as good as the best \emph{uniform polynomial approximation to \( f \)} on the interval containing the eigenvalues of \( \vec{A} \).
For arbitrary \( f \), \cref{eqn:poly_unif} remains the standard bound for Lanczos-FA. 
It   has been studied carefully and is known to hold to a close degree in finite precision arithmetic \cite{musco_musco_sidford_18}.

However, the uniform error bound of \cref{eqn:poly_unif} is often too loose to accurately predict the performance of Lanczos-FA.
Notably, it depends only on the range of eigenvalues \(\mathcal{I}(\vec{A})\) and not on more fine-grained information like the presence of eigenvalue clusters or isolated eigenvalues, which are known to lead to faster convergence.
The expression in \cref{eqn:triangle_ineq1} is more accurate but it cannot be used as an a priori bound since it involves the eigenvalues of the tridiagonal matrix $\vec T_k$, which depend on $\vec b$. It also cannot be used as a practical a posteriori bound since it involves all  eigenvalues of \( \vec{A} \).

The goal of this paper is to address these limitations. Before doing so, we discuss an example to better illustrate why \cref{eqn:poly_unif} can be  loose as an a priori bound.
It is well known that the eigenvalues of \( \vec{T}_k \) are interlaced by those of \( \vec{A} \); that is, \( \Lambda(\vec{T}_k) \subset \mathcal{I}(\vec{A}) \) and between each pair of eigenvalues of \( \vec{T}_k \) is at least one eigenvalue of \( \vec{A} \).
With this property in mind, define \( \mathcal{J}_k(\vec{A}) \) as the set of all \( k \)-tuples \( \bm{\mu} = ( \mu_1 , \ldots , \mu_k ) \in \mathbb{R}^k \) that are interlaced by the eigenvalues of \( \vec{A} \). 
Then, we can use \cref{eqn:triangle_ineq1} to write
\begin{align}
\| f(\vec{A}) \vec{b} - \lan_k(f) \| 
 & \leq  \max_{\bm{\mu}\in \mathcal{J}_k(\vec{A})} \min_{\deg p<k} \left( \| f-p \|_{\Lambda(\vec{A})} + \| f - p \|_{\bm{\mu}} \right) \| \vec{b} \|. \label[ineq]{eqn:triangle_ineq2} 
\end{align}
The bound \cref{eqn:triangle_ineq2} is an a priori error bound, and at least in some special cases, provides more insight than \cref{eqn:poly_unif} in situations where the eigenvalues of $\vec{A}$ are clustered.

\begin{example}
\label{ex:unif_discrete}
Consider \( \vec{A} \) with many eigenvalues uniformly spaced through the interval \( [0,1] \) and a single isolated eigenvalue at \( \kappa > 1 \).
Since the eigenvalues of \( \vec{T}_k \) are interlaced by those of \( \vec A \), there is at most one eigenvalue of \( \vec{T}_k \) between \( 1 \) and \( \kappa \); that is, \( \Lambda( \vec{A} )\cup\Lambda( \vec{T}_k ) \) is contained in \( [0,1] \cup \{ \mu , \kappa \} \) for some \( \mu\in[1,\kappa] \).
We then have
\begin{align}
\label[ineq]{eqn:ex_minimax_bd}
    \| f(\vec{A}) \vec{b} - \lan_k(f) \| 
\leq 2 \max_{\mu\in[1,\kappa]} \min_{\deg p<k} \left( \| f-p \|_{[0,1]\cup\{\mu,\kappa\}} \right) \| \vec{b} \|.
\end{align}
For \( \kappa = 5\), \( f(x) = \exp(-x) \), and \( k=6 \), we use a numerical optimizer to determine that the value  maximizing the right hand side of \cref{eqn:ex_minimax_bd} is  \( \mu^*\approx 4.96\).
In \cref{fig:lanc_poly_approx} we show the error of the Lanczos-FA polynomial along with the optimal uniform polynomial approximations to \( f \) on \( [0,5] \), which contains \( [0,1]\cup\{\mu^*,5\} \).
Here the optimal uniform polynomial approximation is computed by the Remez algorithm.
As expected, the bound from \cref{eqn:ex_minimax_bd}  is significantly better than that from the uniform approximation.

\begin{figure}[ht]
    \includegraphics[width=.97\textwidth]{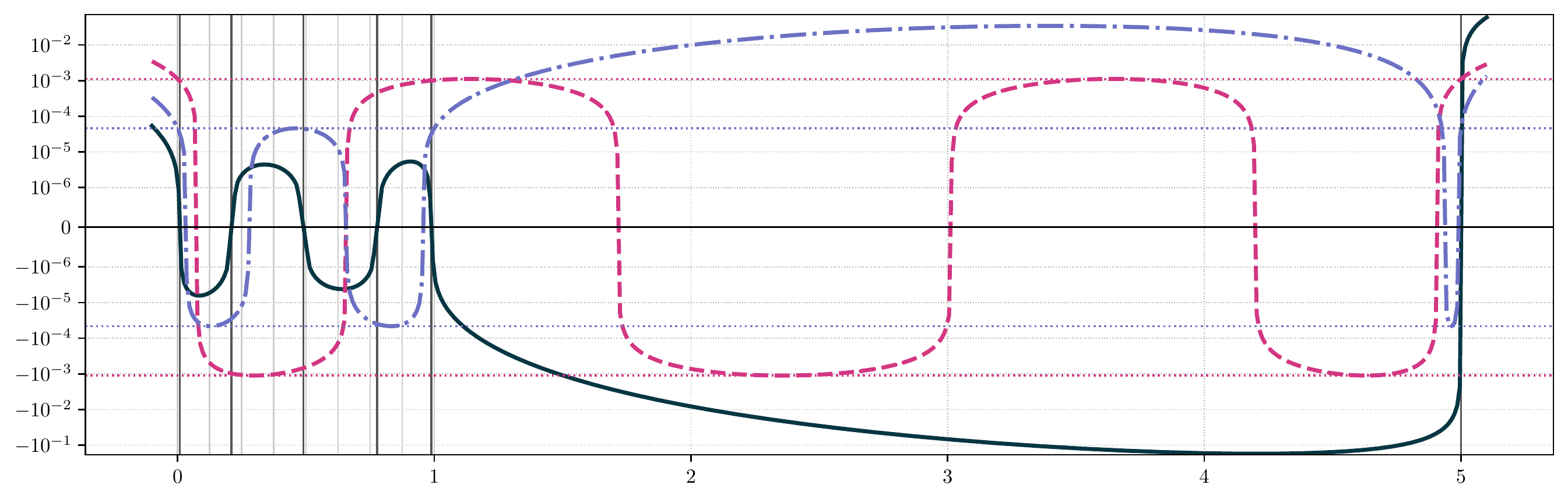}

   \caption{
    Comparison of errors of degree 5 polynomial approximations to \( f(x) = \exp(-x) \).
    \emph{Legend}:
    Lanczos-FA approximation for $\vec b$ with equal projection onto all eigenvectors of $\vec A$ ({\protect\raisebox{0mm}{\protect\includegraphics[scale=.7]{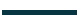}}}), 
    optimal uniform approximation on \( [0,5] \) ({\protect\raisebox{0mm}{\protect\includegraphics[scale=.7]{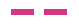}}}), 
    optimal uniform approximation on \( [0,1]\cup\{\mu^*,5\} \) ({\protect\raisebox{0mm}{\protect\includegraphics[scale=.7]{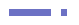}}}).
    The light vertical lines are the eigenvalues of \( \vec{A} \), while the darker vertical lines are the eigenvalues of \( \vec T_6 \) (the Ritz values).
    \emph{Remarks}: 
    Note that the Lanczos-FA approximation becomes very inaccurate on \( (1,5) \) which allows a smaller error on the eigenvalues of $\vec A$, which is the only error that impacts our approximation to $f(\vec A)\vec b$.
    As a result, the uniform approximation on \( [0,1] \cup \{\mu^*,5\} \) is a much better bound for the Lanczos-FA error than the uniform approximation on \( [0,5] \), which remains equally accurate over the entire interval \([0,5]\). 
    }
    \label{fig:lanc_poly_approx}
\end{figure}
\end{example}

\subsection{Our Approach and Roadmap}

Given the potential looseness of the classic uniform error bound on Lanczos-FA \cref{eqn:poly_unif}, our goal is to derive tighter, but still practically computable  error bounds. Ideally, we want bounds that are both generally applicable and easier to apply than e.g., the bound of \cref{eqn:triangle_ineq2} based on interlacing.

One important case where such bounds already exist is when $f(x)=1/x$ and $\vec{A}$ is positive definite.
In this setting, tight a posteriori error bounds are easily obtained by computing the residual \( \| \vec{A}\lan_k(f)- \vec{b} \| \), and moreover, much stronger a priori error bounds are known than \cref{eqn:poly_unif}. In particular, \( \| f(\vec{A})\vec{b} - \lan_k(f) \| \) is equal to the error of the conjugate gradient algorithm (CG) used to solve \( \vec{A} \vec{x} = \vec{b} \) and therefore optimal over the Krylov subspace in the \( \vec{A} \)-norm. This immediately implies a priori bounds depending only on $\min_{\deg p< k} \| f-p \|_{\Lambda(\vec{A})}$, and so can be much tighter than \cref{eqn:poly_unif} for matrices with clustered or isolated eigenvalues (see \cref{sec:linear_systems} for details).

Our approach is inspired by these sharper a posteriori and a priori error bounds for Lanczos-FA in the case of linear systems -- i.e., for $f(x)=1/x$. We exploit the existence of these bounds to address a more general class of functions 
by using the Cauchy integral formula to write the Lanczos-FA error \( f(\vec{A})\vec{b} - \lan_k(f) \) for any analytic $f$ in terms of the Lanczos error for solving a continuum of shifted linear systems in $\vec A$. We then bound this error in terms of the error in computing the solution to  a \emph{single shifted system}, $(\vec A-w \vec I)^{-1} \vec b$. This reduction is presented in \cref{sec:cif_bound}, along with a discussion of related work. We proceed, in \cref{sec:results}, to show how this reduction can be used to obtain useful a priori and a posteriori error bounds. One highlight result is a proof that, for any analytic function $f$, the relative error of Lanczos-FA in approximating \(f(\vec A)\vec b\) can be bounded by a fixed constant times the relative error in solving a slightly shifted linear system in \(\vec A\). 
We provide examples and numerical experiments that illustrate the quality of our bounds in  \cref{sec:examples}. In
\cref{sec:finite_precision} we give an analysis of our bounds in finite precision. Finally, in \cref{sec:quadratic_form}, we discuss generalizations  to quadratic forms \( \vec{b}^\cT f(\vec{A})\vec{b} \).

\section{Lanczos-FA error and the Cauchy integral formula}
\label{sec:cif_bound}

Assuming \( f : \mathbb{C} \rightarrow \mathbb{C} \) is analytic in a neighborhood of the eigenvalues of \( \vec{A} \) and \( \Gamma \) is a simple closed curve or union of simple closed curves inside that neighborhood and enclosing the eigenvalues of \( \vec{A} \), the Cauchy integral formula states that
\begin{align}
    \label{eqn:fofACIF1}
    f(\vec{A})\vec{b} = - \frac{1}{2 \pi i} \oint_{\Gamma} f(z) (\vec{A} - z \vec{I} )^{-1} \vec{b} \, \d{z}.
\end{align}
If \( \Gamma \) also encloses the eigenvalues of \( \vec{T}_k \) we can similarly write the Lanczos-FA approximation as
\begin{align}
\label{eqn:fofT_CIF}
    \vec{Q}_k f(\vec{T}_k) \vec{Q}_k^\cT \vec{b}
    = - \frac{1}{2 \pi i} \oint_{\Gamma} f(z) \vec{Q}_k (\vec{T}_k - z \vec{I} )^{-1} \vec{Q}_k^\cT \vec{b} \, \d{z} .
\end{align}
Observing that the integrand of \cref{eqn:fofACIF1} contains the solution to the shifted linear system \( (\vec{A}-z\vec{I}) \vec{x} = \vec{b} \) while  \cref{eqn:fofT_CIF} contains the Lanczos-FA approximation to the solution, we make the following definition.
\begin{definition}\label{def:err}
For \( z \in \mathbb{C} \), 
define the \( k \)-th Lanczos-FA error and residual for the linear system \( (\vec{A}-z\vec{I}) \vec{x} = \vec{b} \) as,
\begin{align*}
    \err_k(z,\vec{A},\vec{b}) &:= (\vec{A} - z \vec{I})^{-1} \vec{b} - \vec{Q}_k(\vec{T}_k-z\vec{I})^{-1}\vec{Q}_k^\cT \vec{b}
    ,\\
    \Res_k(z,\vec{A},\vec{b}) &:= \vec{b} - (\vec{A} - z \vec{I}) \vec{Q}_k(\vec{T}_k-z\vec{I})^{-1}\vec{Q}_k^\cT \vec{b}.
\end{align*}
As with the Lanczos-FA approximation, we will typically omit the arguments \( \vec{A} \) and \( \vec{b} \), and in the case \( z = 0 \), we will often write \( \err_k \) and \( \Res_k \).
\end{definition}
With \cref{def:err} in place, the error of the Lanczos-FA approximation to \( f(\vec{A})\vec{b} \) can be written as
\begin{align}
    \label{eqn:lanczos_FA_error}
    f(\vec{A}) \vec{b} - \vec{Q}_k f( \vec{T}_k ) \vec{Q}_k^{\cT} \vec{b}
    &= - \frac{1}{2 \pi i} \oint_{\Gamma} f(z) \, \err_k(z) \, \d{z}.
\end{align}
Therefore, if for every $z\in\Gamma$ we are able to understand the convergence of Lanczos-FA on the linear system \( (\vec{A}-z\vec{I}) \vec{x} = \vec{b} \), then this formula lets us understand the convergence of Lanczos-FA for \( f(\vec{A})\vec{b} \).
To simplify bounding \cref{eqn:lanczos_FA_error}, we will write $\err_k(z)$ for all  \( z\in\Gamma \) in terms of the error in solving a single shifted linear system.

To do this, we use the fact that the Lanczos factorization \cref{eqn:lanczos_factorization} can be shifted, even for complex \( z \), to obtain
\begin{align}
    \label{eqn:shifted_lanczos_factorization}
    ( \vec{A} - z \vec{I} ) \vec{Q}_k
    &= \vec{Q}_k ( \vec{T}_k - z \vec{I} ) + \beta_k \vec{q}_{k+1} \vec{e}_k^{\T}.
\end{align}
That is, Lanczos applied to \( (\vec{A},\vec{b}) \) for \( k \) steps produces output \( \vec{Q}_k \) and \( \vec{T}_k \) satisfying \cref{eqn:lanczos_factorization} while Lanczos applied to \( (\vec{A} - z \vec{I}, \vec{b}) \) for \( k \) steps produces output \( \vec{Q}_k \) and \( \vec{T}_k-z \vec I \) satisfying \cref{eqn:shifted_lanczos_factorization}. 
Using this fact, we have the following well known lemma.
\begin{lemma}
\label{thm:shifted_lanczos_equivalence}
For all \( z \) where \( \vec{T}_k - z \vec{I} \) is invertible, 
\begin{align*}
    \Res_k(z) 
    = \left( \frac{(-1)^{k}}{\det(\vec{T}_k -z \vec{I}) }\prod_{j=1}^{k} \beta_j \right) \| \vec{b} \|_2\: \vec{q}_{k+1}.
\end{align*}
\end{lemma}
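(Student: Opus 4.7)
The plan is to reduce the residual to an explicit scalar-times-vector expression and then evaluate that scalar via Cramer's rule, exploiting the tridiagonal structure of $\vec{T}_k$.

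First, I would substitute the shifted Lanczos factorization \cref{eqn:shifted_lanczos_factorization} into the definition of $\Res_k(z)$. Using $\vec{Q}_k^\cT \vec{b} = \|\vec{b}\|_2 \vec{e}_1$ (since $\vec{q}_1 = \vec{b}/\|\vec{b}\|_2$) and the fact that $\vec{b} \in \mathcal{K}_k(\vec{A},\vec{b})$ so $\vec{Q}_k \vec{Q}_k^\cT \vec{b} = \vec{b}$, one obtains
\begin{align*}
    (\vec{A}-z\vec{I})\vec{Q}_k(\vec{T}_k-z\vec{I})^{-1}\vec{Q}_k^\cT \vec{b}
    &= \vec{Q}_k(\vec{T}_k-z\vec{I})(\vec{T}_k-z\vec{I})^{-1}\vec{Q}_k^\cT \vec{b} + \beta_k \vec{q}_{k+1}\vec{e}_k^\T (\vec{T}_k-z\vec{I})^{-1}\vec{Q}_k^\cT \vec{b} \\
    &= \vec{b} + \beta_k \|\vec{b}\|_2\, \bigl[\vec{e}_k^\T (\vec{T}_k-z\vec{I})^{-1}\vec{e}_1\bigr]\, \vec{q}_{k+1},
\end{align*}
so that
\begin{align*}
    \Res_k(z) = -\beta_k \|\vec{b}\|_2\, \bigl[\vec{e}_k^\T (\vec{T}_k-z\vec{I})^{-1}\vec{e}_1\bigr]\, \vec{q}_{k+1}.
\end{align*}

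Next, I would compute the scalar $\vec{e}_k^\T(\vec{T}_k - z\vec{I})^{-1}\vec{e}_1$, which is the $(k,1)$ entry of $(\vec{T}_k-z\vec{I})^{-1}$. By Cramer's rule, this equals $(-1)^{k+1} M_{1,k}/\det(\vec{T}_k - z\vec{I})$, where $M_{1,k}$ is the minor obtained by deleting row $1$ and column $k$ of $\vec{T}_k - z\vec{I}$. The main computational point, and the only step requiring a small bit of care, is to observe that because $\vec{T}_k$ is tridiagonal, this $(k-1)\times(k-1)$ minor is triangular: after removing the first row and last column, the new $(i,i)$ entry is the old $(i+1,i)$ entry, namely $\beta_i$, while everything strictly below the new diagonal vanishes. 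Hence $M_{1,k} = \prod_{j=1}^{k-1}\beta_j$.

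Combining the two steps gives
\begin{align*}
    \Res_k(z) = -\beta_k \|\vec{b}\|_2 \cdot \frac{(-1)^{k+1}\prod_{j=1}^{k-1}\beta_j}{\det(\vec{T}_k - z\vec{I})}\, \vec{q}_{k+1} = \left(\frac{(-1)^k \prod_{j=1}^k \beta_j}{\det(\vec{T}_k-z\vec{I})}\right)\|\vec{b}\|_2\, \vec{q}_{k+1},
\end{align*}
which is the claimed identity. I do not expect any serious obstacle: the argument is almost entirely bookkeeping once the shifted Lanczos factorization is plugged in, with the only mildly delicate step being recognizing the triangular structure of the $(1,k)$-minor so that its determinant is just the product of subdiagonal entries.
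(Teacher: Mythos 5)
Your proposal is correct and follows essentially the same route as the paper: substitute the shifted Lanczos factorization \cref{eqn:shifted_lanczos_factorization} to reduce the residual to $-\beta_k \|\vec{b}\|_2\,[\vec{e}_k^\T(\vec{T}_k - z\vec{I})^{-1}\vec{e}_1]\,\vec{q}_{k+1}$, then evaluate the $(k,1)$ entry of the inverse via the adjugate/cofactor formula, using the tridiagonal structure to identify the minor as $\prod_{j=1}^{k-1}\beta_j$. The signs also check out, since $(-1)^{k+1} = (-1)^{k-1}$ matches the paper's exponent.
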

\begin{proof}
    From \cref{eqn:shifted_lanczos_factorization}, and the fact that $\vec {Q}_k$'s first column is $\vec b/\|{\vec b}\|_2$, it is clear that,
    \begin{align*}
        (\vec{A} - z\vec{I}) \vec{Q}_k(\vec{T}_k-z\vec{I})^{-1}\vec{Q}_k^\cT \vec{b}
        &= (\vec{A}-z \vec{I}) \vec{Q}_k (\vec{T}_k - z \vec{I})^{-1} \| \vec{b} \|_2 \vec{e}_1
        \\&= \vec{Q}_k \| \vec{b} \|_2 \vec{e}_1 + \beta_k \vec{q}_{k+1} \vec{e}_k^{\T} (\vec{T}_k - z \vec{I})^{-1} \| \vec{b} \|_2 \vec{e}_1
        \\&= \vec{b} + \beta_k \vec{q}_{k+1} \vec{e}_k^{\T} (\vec{T}_k - z \vec{I})^{-1} \| \vec{b} \|_2 \vec{e}_1.
    \end{align*}

    Using the formula \( (\vec{T}_k - z \vec{I} )^{-1} = (1/ \det(\vec{T}_k - z \vec{I})) \mbox{ adj}( \vec{T}_k - z \vec{I} ) \), we see that
    \begin{align*}
        \vec{e}_k^{\T} (\vec{T}_k - z \vec{I})^{-1} \vec{e}_1
        = \frac{(-1)^{k-1}}{\det( \vec{T}_k - z \vec{I} )}\prod_{j=1}^{k-1} \beta_j.
        \tag*{\qed}
    \end{align*}
\end{proof}
We use \cref{thm:shifted_lanczos_equivalence} to relate \( \err_k(z) \) to \( \err_k(w) \) for any $z,w \in \mathbb{C}$.

\begin{definition}
For $w,z\in\mathbb{C}$ define $h_{w,z}:\R\to\mathbb{C}$ and $h_z:\R\to\mathbb{C}$ by
\begin{align*}
    h_{w,z}(x) := \frac{x-w}{x-z}
    ,&&
    h_z(x):= \frac{1}{x-z}
\end{align*}
\end{definition}

\begin{corollary} 
For all \( z , w \in \mathbb{C} \), where \( \vec{A} - z \vec{I} \) and \( \vec{A} - w \vec{I} \) are both invertible,
\label{thm:shifted_linear_system_error}
\begin{align*}
    \err_k(z)
    &= \det(h_{w,z}(\vec{T}_k))  h_{w,z}(\vec{A}) 
    \,\err_k(w)
    \\
    \Res_k(z)
    &= \det(h_{w,z}(\vec{T}_k))
    \,\Res_k(w).
\end{align*}
\end{corollary}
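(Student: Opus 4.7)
The plan is to derive the residual identity first as a direct consequence of \cref{thm:shifted_lanczos_equivalence} (essentially taking a ratio) and then pass from the residual identity to the error identity using the elementary fact that $\Res_k(z) = (\vec{A}-z\vec{I})\err_k(z)$, which is immediate from \cref{def:err}. The key observation that makes everything work is that in the formula from \cref{thm:shifted_lanczos_equivalence}, the only $z$-dependence in $\Res_k(z)$ sits inside the scalar prefactor $1/\det(\vec{T}_k - z\vec{I})$; the vector $\|\vec{b}\|_2\, \vec{q}_{k+1}$ and the product $\prod_{j=1}^k \beta_j$ do not depend on $z$, so ratios collapse cleanly.

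First I would apply \cref{thm:shifted_lanczos_equivalence} at both $z$ and $w$ and divide to obtain
\begin{align*}
    \Res_k(z) = \frac{\det(\vec{T}_k - w\vec{I})}{\det(\vec{T}_k - z\vec{I})}\, \Res_k(w).
\end{align*}
Since $h_{w,z}(x) = (x-w)/(x-z)$, the functional calculus gives $h_{w,z}(\vec{T}_k) = (\vec{T}_k - w\vec{I})(\vec{T}_k - z\vec{I})^{-1}$, and multiplicativity of the determinant yields $\det(h_{w,z}(\vec{T}_k)) = \det(\vec{T}_k - w\vec{I})/\det(\vec{T}_k - z\vec{I})$. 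This establishes the second displayed identity of the corollary.

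For the error identity, I would write $\err_k(z) = (\vec{A} - z\vec{I})^{-1} \Res_k(z)$ (directly from \cref{def:err}), substitute the residual identity just derived, and then multiply and divide by $(\vec{A}-w\vec{I})$:
\begin{align*}
    \err_k(z)
    = (\vec{A}-z\vec{I})^{-1} \det(h_{w,z}(\vec{T}_k))\, \Res_k(w)
    = \det(h_{w,z}(\vec{T}_k))\, (\vec{A}-w\vec{I})(\vec{A}-z\vec{I})^{-1}\err_k(w).
\end{align*}
Since $(\vec{A}-w\vec{I})$ and $(\vec{A}-z\vec{I})^{-1}$ are both polynomials in $\vec{A}$ they commute, and their product equals $h_{w,z}(\vec{A})$ by the same functional calculus argument as before. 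This yields the first identity.

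There is no substantive obstacle here; the proof is essentially bookkeeping. The only subtlety is checking invertibility: the hypotheses guarantee $\vec{A}-z\vec{I}$ and $\vec{A}-w\vec{I}$ are invertible, and the use of \cref{thm:shifted_lanczos_equivalence} requires $\vec{T}_k - z\vec{I}$ and $\vec{T}_k - w\vec{I}$ to be invertible (which is implicit in writing $\err_k(z)$ and $\err_k(w)$ at all, since otherwise the Lanczos-FA approximation is undefined). If Lanczos has terminated early so that some $\beta_j = 0$, then both sides of each identity vanish and the conclusions hold trivially.
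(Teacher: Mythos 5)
Your proposal is correct and follows essentially the same route as the paper: the paper likewise divides the two instances of \cref{thm:shifted_lanczos_equivalence} to get $\Res_k(z) = \bigl(\det(\vec{T}_k - w\vec{I})/\det(\vec{T}_k - z\vec{I})\bigr)\Res_k(w) = \det(h_{w,z}(\vec{T}_k))\Res_k(w)$, and then uses $\Res_k(z) = (\vec{A}-z\vec{I})\err_k(z)$ at both $z$ and $w$ to convert this into the error identity. Your added remarks on invertibility and early termination are fine but not part of the paper's argument.
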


\begin{proof}
By \cref{thm:shifted_lanczos_equivalence},
\begin{align*}
    \det(\vec{T}_k - z\vec{I}) \,\Res_k (z) = \det(\vec{T}_k - w \vec{I}) \,\Res_k (w). 
\end{align*}
Thus,
\begin{align*}
    \Res_k (z) = \frac{\det(\vec{T}_k - w\vec{I})}{\det(\vec{T}_k - z\vec{I})} \,\Res_k (w) = \det(h_{w,z}(\vec{T}_k)) \,\Res_k (w) .
\end{align*}
Noting that \( \Res_k (z) = (\vec{A}-z\vec{I}) \,\err_k(z) \) and 
\( \Res_k (w) = (\vec{A}-w\vec{I}) \err_k(w) \), we obtain the relation between the
errors,
\begin{align*}
    \err_k(z)
    &= \det(h_{w,z}(\vec{T}_k)) (\vec{A} - z \vec{I})^{-1} ( \vec{A} - w \vec{I})\, \err_k ( w ) 
    \\&= \det(h_{w,z}(\vec{T}_k))  h_{w,z}(\vec{A}) \,\err_k(w) .
    \tag*{\qed}
\end{align*}
\end{proof}

In summary, combining \cref{eqn:lanczos_FA_error} and \cref{thm:shifted_linear_system_error} we have the following corollary.
This result is by no means new, and appears throughout the literature; see for instance \cite{frommer_simoncini_09} and \cite[Theorem 3.4]{frommer_guttel_schweitzer_14}.
\begin{corollary}\label{thm:integral_error_vec} 
Suppose \( \vec{A} \) is a Hermitian matrix and \( f : \mathbb{C} \rightarrow \mathbb{C} \) is a function analytic in a neighborhood of the eigenvalues of \( \vec{A} \) and \( \vec{T}_k \), where  \( \vec{T}_k \) is the tridiagonal matrix output by Lanczos run on \( \vec{A},\vec{b} \) for \( k \) steps.
Then, if \( \Gamma \) is a simple closed curve or union of simple closed curves inside this neighborhood and enclosing the eigenvalues of \( \vec{A} \) and \( \vec{T}_k \) and \( w\in\mathbb{C} \) is such that \( w \not\in \Lambda(\vec{T}_k)\cup\Lambda(\vec{A}) \),
\begin{align*}
    f(\vec{A}) \vec{b} - \lan_k(f)
    &= \left( - \frac{1}{2\pi i} \oint_{\Gamma} f(z) \det(h_{w,z}(\vec{T}_k)) h_{w,z}(\vec{A}) \d{z} \right) \, \err_k(w).
\end{align*}
\end{corollary}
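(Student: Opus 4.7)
The plan is to combine the two preceding ingredients essentially verbatim, with only a small amount of bookkeeping to justify pulling constants out of the contour integral.

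First, I would invoke \cref{eqn:lanczos_FA_error}, which already expresses the Lanczos-FA error as
\begin{align*}
f(\vec{A}) \vec{b} - \lan_k(f)
= -\frac{1}{2 \pi i} \oint_{\Gamma} f(z) \, \err_k(z) \, \d{z},
\end{align*}
valid because the Cauchy integral formula applies to both \( f(\vec{A}) \vec{b} \) and \( \vec{Q}_k f(\vec{T}_k) \vec{Q}_k^\cT \vec{b} \) under the hypotheses on \( \Gamma \). Next, I would fix a single \( w \in \mathbb{C} \setminus (\Lambda(\vec{A}) \cup \Lambda(\vec{T}_k)) \) (as in the hypothesis) and apply \cref{thm:shifted_linear_system_error}, which gives
\begin{align*}
\err_k(z) = \det(h_{w,z}(\vec{T}_k)) \, h_{w,z}(\vec{A}) \, \err_k(w)
\end{align*}
for every \( z \) such that \( \vec{A} - z\vec{I} \) is invertible. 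Since \( \Gamma \) encloses but does not pass through the eigenvalues of \( \vec{A} \) and \( \vec{T}_k \), this identity holds for all \( z \in \Gamma \).

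Finally, I would substitute this expression for \( \err_k(z) \) into the integral, and observe that \( \err_k(w) \) is a vector that does not depend on \( z \), so it can be pulled outside the contour integral. The scalar factor \( \det(h_{w,z}(\vec{T}_k)) \) depends on \( z \) and stays inside, as does the matrix \( h_{w,z}(\vec{A}) \); but both are well-defined and bounded on \( \Gamma \) by the choice of \( w \) and the fact that \( \Gamma \) avoids the spectra. This yields exactly
\begin{align*}
f(\vec{A}) \vec{b} - \lan_k(f) = \left( -\frac{1}{2\pi i} \oint_{\Gamma} f(z) \det(h_{w,z}(\vec{T}_k)) \, h_{w,z}(\vec{A}) \, \d{z} \right) \err_k(w),
\end{align*}
as claimed.

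There is essentially no hard step: the only thing to be careful about is justifying that the integrand is regular enough on \( \Gamma \) to interchange the integral with the action on the fixed vector \( \err_k(w) \). This is immediate because the matrix-valued integrand is continuous in \( z \) on the compact curve \( \Gamma \), so the vector-valued integral exists componentwise and the factoring out of \( \err_k(w) \) is just linearity of the integral.
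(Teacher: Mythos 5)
Your proposal is correct and matches the paper exactly: the paper states this corollary as the direct combination of \cref{eqn:lanczos_FA_error} with \cref{thm:shifted_linear_system_error}, substituting the latter's expression for \( \err_k(z) \) into the contour integral and factoring out the fixed vector \( \err_k(w) \) by linearity. Your additional remarks on the invertibility of \( \vec{A}-z\vec{I} \) and \( \vec{T}_k - z\vec{I} \) for \( z\in\Gamma \) are the right (and only) points needing care.
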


\subsection{Bound on Lanczos-FA error in terms of linear system error}

Our main result is a flexible bound for the  Lanczos-FA error, obtained by bounding the integral in the right-hand side of \cref{thm:integral_error_vec}. As we will see in \cref{sec:results}, we can instantiate this theorem to obtain effective a priori and a posteriori error bounds in many settings.
\begin{theorem}
\label{thm:err_int}
Consider the setting of \cref{thm:integral_error_vec}. If, additionally, for some $S_0,S_1,\ldots,S_k \subset \mathbb{R}$ we have \( \Lambda(\vec{A})\subset S_0\ \) and \( \lambda_i(\vec{T}_k) \in S_i \) for  \( i=1,\ldots, k\), then
\begin{align*}
    \| f(\vec{A})\vec{b} - \lan_k(f) \|
    \leq \underbrace{\vphantom{ \bigg| }\left( \frac{1}{2\pi}\oint_{\Gamma} |f(z)| \cdot \left(\prod_{i=1}^{k} \| h_{w,z}\|_{S_i}\right) \cdot \|h_{w,z}\|_{S_0} \! \cdot | \d{z} | \right)}_{\text{integral term}}  \hspace{-1.2 em}\underbrace{\vphantom{ \Bigg| } \| \err_k(w) \| . \hspace{-.4em} }_{\text{linear system error}} \hspace{-.5em}
\end{align*}
\end{theorem}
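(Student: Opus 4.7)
My starting point is the integral identity from \cref{thm:integral_error_vec},
\[
f(\vec{A})\vec{b} - \lan_k(f) \;=\; -\frac{1}{2\pi i}\oint_{\Gamma} f(z)\,\det(h_{w,z}(\vec{T}_k))\, h_{w,z}(\vec{A})\,\err_k(w)\, \d{z}.
\]
The entire plan is to take the chosen norm of both sides and push it inside the contour integral via the vector-valued triangle inequality
\(
\bigl\|\oint_{\Gamma} \vec{g}(z)\, \d{z}\bigr\| \le \oint_{\Gamma} \|\vec{g}(z)\|\,|\d{z}|,
\)
then bound the resulting integrand pointwise in~$z$. Since $f(z)$ and $\det(h_{w,z}(\vec{T}_k))$ are scalars, they pull out of the norm, yielding
\[
\| f(\vec{A})\vec{b} - \lan_k(f) \| \;\le\; \frac{1}{2\pi}\oint_{\Gamma} |f(z)| \cdot \bigl|\det(h_{w,z}(\vec{T}_k))\bigr| \cdot \bigl\|h_{w,z}(\vec{A})\,\err_k(w)\bigr\|\, |\d{z}|.
\]

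Next I bound the two $z$-dependent factors. For the determinant, since $\vec{T}_k$ is real symmetric, the spectral mapping theorem gives $\det(h_{w,z}(\vec{T}_k)) = \prod_{i=1}^{k} h_{w,z}(\lambda_i(\vec{T}_k))$, so applying the scalar triangle inequality and the hypothesis $\lambda_i(\vec{T}_k)\in S_i$ yields $|\det(h_{w,z}(\vec{T}_k))| \le \prod_{i=1}^{k}\|h_{w,z}\|_{S_i}$. For the remaining factor, I reuse the mechanism employed in the introduction's derivation of \cref{eqn:poly_unif}: because $\|\cdot\|$ is induced by a positive definite matrix commuting with $\vec{A}$, one has $\|g(\vec{A})\vec{v}\| \le \|g(\vec{A})\|_2\,\|\vec{v}\|$ for any scalar function $g$. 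Setting $g=h_{w,z}$ and $\vec{v}=\err_k(w)$ and combining with $\|h_{w,z}(\vec{A})\|_2 = \|h_{w,z}\|_{\Lambda(\vec{A})} \le \|h_{w,z}\|_{S_0}$ (normality of $h_{w,z}(\vec{A})$, plus $\Lambda(\vec{A})\subset S_0$) gives $\|h_{w,z}(\vec{A})\err_k(w)\| \le \|h_{w,z}\|_{S_0}\,\|\err_k(w)\|$.

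Substituting both bounds into the integral and pulling the $z$-independent factor $\|\err_k(w)\|$ outside produces the claimed estimate. The only subtle point is that the introduction stated the commuting-norm inequality only for $g:\mathbb{R}\to\mathbb{R}$, whereas here $h_{w,z}$ is complex-valued (since $z,w\in\mathbb{C}$). This is a one-line extension: $h_{w,z}(\vec{A})^{\cT}h_{w,z}(\vec{A}) = |h_{w,z}|^2(\vec{A})$ is a real-valued function of $\vec{A}$ that commutes with the norm-defining matrix, so the same simultaneous-diagonalization argument applies to give $\|h_{w,z}(\vec{A})\vec{v}\|^2 \le \|h_{w,z}\|_{\Lambda(\vec{A})}^2\,\|\vec{v}\|^2$. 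Beyond that verification, the argument is purely mechanical and presents no real obstacle.
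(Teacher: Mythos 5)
Your proposal is correct and follows essentially the same route as the paper's proof: apply the triangle inequality for integrals and submultiplicativity to \cref{thm:integral_error_vec}, then bound \( |\det(h_{w,z}(\vec{T}_k))| \) by \( \prod_i \|h_{w,z}\|_{S_i} \) via the eigenvalue hypothesis and \( \|h_{w,z}(\vec{A})\|_2 \) by \( \|h_{w,z}\|_{S_0} \). Your extra remark justifying the commuting-norm inequality for complex-valued \( h_{w,z} \) is a careful touch the paper leaves implicit, but it does not change the argument.
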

The above  bound depends on our choices of $\Gamma$, $w$, and the sets $S_0, S_1,\ldots, S_k$, which must contain the eigenvalues of $\vec A$ and $\vec T_k$. 
The sets $S_0, S_1,\ldots, S_k$ should be chosen based on the informatoin we have about $\vec A$ and $\vec T_k$. 
For example, we could take all these sets to be the eigenvalue range $\mathcal{I}( \vec A)$. 
If we have more information a priori about the eigenvalues of $\vec A$, we can obtain a tighter bound by choosing smaller $S_0$, with correspondingly lower $\|h_{w,z}\|_{S_0}$. 
For an a posteriori bound, we can simply set $S_i = \{ \lambda_i(\vec T_k) \}$, for $i = 1,\ldots,k$. This gives an optimal value for $\|h_{w,z}\|_{S_i}$. Both approaches are detailed in \cref{sec:results}.

We emphasize that the integral term and linear system error term in the theorem are entirely decoupled.
Thus, once the integral term is computed, bounding the error of Lanczos-FA for \( f(\vec{A})\vec{b} \) is reduced to bounding \( \| \err_k(w) \| \), and if the integral term can be bounded independently of \( k \), \cref{thm:err_int} implies that, up to a constant factor, the Lanczos-FA approximation to \( f(\vec{A})\vec{b} \) converges at least as fast as \( \| \err_k(w) \| \).

\begin{proof}[Proof of \cref{thm:err_int}]
Applying the triangle inequality for integrals and the submultiplicativity of matrix norms to \cref{thm:integral_error_vec} we have
\begin{align}
    \label[ineq]{eqn:integral_error}
    \| f(\vec{A}) \vec{b} - \lan_k(f) \|
    \leq \left( \frac{1}{2\pi}\oint_{\Gamma} |f(z)| \cdot |\det(h_{w,z}(\vec{T}_k))| \cdot \| h_{w,z}(\vec{A}) \|_2 \!\cdot |\d{z} | \right) \| \err_k(w) \| .
\end{align}
Next, since 
\( \Lambda(\vec{A}) \subseteq S_0 \) then
\begin{align*}
    \| h_{w,z}(\vec{A}) \|_2 = \max_{i=1,\ldots, n} | h_{w,z}(\lambda_i(\vec{A})) | \leq \|h_{w,z}\|_{S_0},
\end{align*}
and similarly, if  \( \lambda_i(\vec{T}_k) \in S_i \) for \( i=1, \ldots , k, \) then 
\begin{align}
    \label[ineq]{eqn:dkwz_qwz}
    |\!\det(h_{w,z}(\vec{T}_k))| = \left| \prod_{i=1}^{k} h_{w,z}(\lambda_i(\vec{T}_k)) \right| \leq
    \prod_{i=1}^{k} \|h_{w,z}\|_{S_i}.
\end{align}
Combining these inequalities yields the result.
\end{proof}

\subsection{Comparison with previous work}


Our framework for analyzing Lanczos-FA has four properties which differentiate it from past work:
(i) it is applicable to a wide range of functions, 
(ii) it yields a priori bounds dependent on fine-grained properties of the spectrum of \( \vec{A} \) such as clustered or isolated eigenvalues, 
(iii) it can be used a posteriori as a practical stopping criterion,
and (iv) it is applicable when computations are carried out in finite precision arithmetic. 
To the best of our knowledge, no existing analysis satisfies more than two of these properties simultaneously.
In this section, we provide a brief overview of the most relevant past work.

Most directly related to our framework is a series of works which also make use of the shift-invariance of Krylov subspaces when \( f \) is a Stieltjes function\footnote{A function \( f \) defined on the positive real axis is a Stieltjes function if and only if \( f(x) \geq 0 \) for all $x\in\R$ and \( f \) has an analytic extension to the cut plane \( \mathbb{C} \setminus (-\infty,0] \) satisfying \( \Im(f(x)) \leq 0 \) for all \( x \) in the upper half plane \cite[Theorem 3.2]{berg_07} \cite[p. 127 attributed to Krein]{aheizer_65}.} \cite{frommer_guttel_schweitzer_14a,frommer_schweitzer_15,ilic_turner_simpson_09} or a certain type of rational function \cite{frommer_kahl_lippert_rittich_13,frommer_simoncini_08b,frommer_simoncini_09}.
These analyses are applicable a priori and a posteriori and in fact allow for corresponding error \emph{lower bounds} as well.
However, these bounds cannot be applied to more general functions, and the impact of a perturbed Lanczos recurrence in finite precision is not considered.

The most detailed generally applicable analysis is \cite{musco_musco_sidford_18}, which extends \cite{druskin_knizhnerman_91,druskin_knizhnerman_95} and studies \cref{eqn:poly_unif} when Lanczos is run in finite precision.
However, as discussed in \cref{sec:polynomial_bounds}, \cref{eqn:poly_unif} is often too pessimistic in practice as it does not depend on the fine-grained properties about the distribution of eigenvalues.
Another generally applicable analysis is \cite{hochbruck_lubich_selhofer_98}, which suggests replacing \( \err_k(z) \) with \( \Res_k(z) \) in \cref{eqn:lanczos_FA_error}.
Since \( \Res_k(z) \) can be computed once the outputs of Lanczos have been obtained, the resulting integral can be computed (or at least approximated by a quadrature rule).
However, this approach does not take into account the actual relationship between \( \Res_k(z) \) and \( \err_k(z) \), and therefore gives only an estimate of the error, not a true bound.
Another Cauchy integral formula based approach is \cite{hochbruck_lubich_97} which shows that Lanczos-FA exhibits superlinear convergence for the matrix exponential and certain other specific analytic functions.

There are a variety of other bounds specialized to individual functions.
For example, it is known that if \( \vec{A} \) is nonnegative definite and \( t > 0 \), then the error in the Lanczos-FA approximation for the matrix exponential \( \exp( t \vec{A}) \vec{b} \) can be related to the maximum over \( s \in [0,t] \) of the error in the  optimal approximation to \( \exp( s \vec{A}) \vec{b} \) over a Krylov space of slightly lower dimension \cite{druskin_greenbaum_knizhnerman_98}.
More recent work involving the matrix exponential are \cite{jia_lv_14,jawecki_auzinger_koch_19,jawecki_21}.
There is also a range of work which analyzes the convergence of Lanczos-FA and related methods for computing the square root and sign functions \cite{boricci_99,borici_03,eshof_frommer_lippert_schilling_van_der_vorst_02}.

\section{Applying our framework}
\label{sec:results}

We proceed to show how to effectively bound the integral term of \cref{thm:err_int}, to give  a priori and a posteriori  bounds on the Lanczos-FA error, assuming accurate bounds on \( \| \err_k(w) \| \) are available.
Throughout, we assume $w\in\mathbb{R}$ and we do not discuss in detail how to bound this linear system error  -- there are many known approaches, both a priori and a posteriori, and the best bounds to use are often context dependent. 
For a more detailed discussion we refer readers to \cref{sec:linear_systems}.

To use \cref{thm:err_int}, we must evaluate or bound \( \|h_{w,z}\|_{S_i} \).
Towards this end, we introduce the following lemmas, which apply when  $S_i$ is an interval.
These lemmas are also useful when $S_i$ is a union of intervals -- in that case $\|h_{w,z}\|_{S_i}$ is bounded by the maximum bound on any of these intervals.
i
\begin{lemma}
\label{thm:Q_wz_value}
    For any interval \( [a,b] \subset \mathbb{R} \), if \( z \in \mathbb{C} \setminus [a,b] \) and \( w\in\mathbb{R} \), we have
\begin{align*}
    \|h_{w,z}\|_{[a,b]}
    = \max \left\{ 
    \left| \frac{a-w}{a-z} \right|, 
    \left| \frac{b-w}{b-z} \right|,
    \left( \left| \frac{z-w}{\Im(z)} \right|~\text{ if } x^{*} \in [a,b] ~\text{else}~0 \right)
    \right\}
\end{align*}
where 
\begin{align*}
    x^* := \frac{\Re(z)^2 + \Im(z)^2 - \Re(z) w}{\Re(z)-w}.
\end{align*}

\end{lemma}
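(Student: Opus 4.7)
The plan is a direct calculus exercise: find the maximum of the smooth real-valued function
\[
\phi(x) := |h_{w,z}(x)|^2 = \frac{(x-w)^2}{(x-\Re z)^2 + (\Im z)^2}
\]
over the closed interval $[a,b]$. Since $z \notin [a,b]$ guarantees that $\phi$ is continuous (and in fact $C^\infty$) on $[a,b]$, the supremum is attained either at an endpoint or at an interior critical point, so it suffices to locate all critical points of $\phi$ and evaluate $|h_{w,z}|$ there.

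The first step is to differentiate $\phi$ and factor the numerator of $\phi'(x)$ as
\[
2(x-w)\Bigl\{\,[(x-\Re z)^2 + (\Im z)^2] - (x-w)(x-\Re z)\,\Bigr\}.
\]
This yields two families of critical points: $x=w$, where $\phi(w)=0$ (a minimum, so irrelevant for the supremum), and solutions of the bracketed expression. Expanding and collecting terms, the bracket reduces to a linear equation $(\Re z - w)\,x = \Re(z)^2 + \Im(z)^2 - \Re(z)\,w$, whose unique solution (when $\Re z \neq w$) is exactly $x^*$ in the statement.

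The second step is to evaluate $|h_{w,z}(x^*)|$. Using the defining identity $(x^*-\Re z)^2 + (\Im z)^2 = (x^*-w)(x^*-\Re z)$ gives
\[
|h_{w,z}(x^*)|^2 = \frac{(x^*-w)^2}{(x^*-w)(x^*-\Re z)} = \frac{x^*-w}{x^*-\Re z}.
\]
A short computation gives $x^* - w = \tfrac{(\Re z - w)^2 + (\Im z)^2}{\Re z - w}$ and $x^* - \Re z = \tfrac{(\Im z)^2}{\Re z - w}$, so $|h_{w,z}(x^*)|^2 = \tfrac{(\Re z - w)^2 + (\Im z)^2}{(\Im z)^2} = \left|\tfrac{z-w}{\Im z}\right|^2$. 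The claimed expression then follows by taking the maximum over the endpoint values $|h_{w,z}(a)|$, $|h_{w,z}(b)|$, and (when applicable) the interior critical value $|h_{w,z}(x^*)|$.

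Finally I would dispose of two degenerate cases. If $\Re z = w$, the linear equation has no solution and the only real critical point is the minimum $x=w$, so the maximum is at an endpoint; the formula's third argument correctly disappears since $x^*$ is undefined and the conditional excludes it. If $\Im z = 0$, the hypothesis $z \notin [a,b]$ forces $z = \Re z \notin [a,b]$; a direct substitution shows $x^* = z \notin [a,b]$, so again the interior term is (correctly) excluded. The only real obstacle in the proof is the algebraic verification that the critical-point value simplifies to $|z-w|/|\Im z|$; beyond that, the argument is standard single-variable optimization.
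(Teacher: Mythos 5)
Your proposal is correct and follows essentially the same route as the paper: differentiate $|h_{w,z}(x)|^2$, identify $x=w$ and $x^*$ as the only real critical points, and compare the endpoint values with $|h_{w,z}(x^*)| = |z-w|/|\Im(z)|$. You actually supply more detail than the paper does (the explicit algebra for $x^*-w$ and $x^*-\Re z$, and the degenerate cases $\Re z = w$ and $\Im z = 0$), which is a welcome addition rather than a deviation.
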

\begin{proof}
    Note that for $x\in\mathbb{R}$, 
\begin{align*}
    | h_{w,z} (x) |^2 = \left| \frac{x-w}{x-z} \right|^2 = \frac{(x-w )^2}{( x - \Re(z) )^2 + \Im(z)^2} ,
\end{align*}
and
\begin{align*}
    \frac{\d}{\d{x}} \left( | h_{w,z} (x) |^2 \right) = \frac{[ (x - \Re(z) )^2 + \Im(z )^2 ]2(x-w) - (x-w )^2 2 ( x - \Re(z) )}{[ (x - \Re(z) )^2 + \Im(z )^2 ]^2} .
\end{align*}
Aside from \( x=w \), where \( h_{w,z} (x) = 0 \), the only value \( x \in \R \) for which \( \frac{\d}{\d{x}} \left( | h_{w,z} (x) |^2 \right) = 0 \) is \( x^* \).
This implies that the only possible local extrema of  \( | h_{w,z} (x)|  \) on \( [a,b] \) are  \( a \), \( b \), and \( x^* \) if \( x^* \in [a,b] \).
Substituting the expression for \( x^{*} \) into that for \( | h_{w,z} ( x^{*} ) | \), one finds, after some algebra, that \( | h_{w,z} ( x^{*} ) | = | z-w | / | \Im (z) | \).
\end{proof}

\begin{lemma}
\label{thm:level_sets}
Fix \( r >0 \), let  \( \mathcal{D}(c,t) \) be the disc in the complex plane centered at \( c \) with radius \( t \geq 0\), and define
\begin{align*}
    X_r = \bigcup_{ x \in [a,b]  } \mathcal{D}\left( x, \frac{|x-w|}{r} \right).
\end{align*}
Then for \( z \in \mathbb{C} \setminus X_r \), we have
\begin{align*}
    \|h_{w,z}\|_{[a,b]} 
    \leq r.
\end{align*}
In particular, if \( z \) is on the boundary of \( X_r \), then \( \|h_{w,z}\|_{[a,b]}  = r \).
\end{lemma}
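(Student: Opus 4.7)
The plan is to translate the analytic bound $\|h_{w,z}\|_{[a,b]} \leq r$ into a geometric condition on $z$, and then use a short compactness argument to handle the boundary case.

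First, I would observe that for any fixed $x \in [a,b]$ with $x \neq z$, the pointwise bound $|h_{w,z}(x)| = |x-w|/|x-z| \leq r$ is algebraically equivalent to $|x-z| \geq |x-w|/r$. Geometrically, this says $z$ lies outside the open disc $\mathcal{D}(x, |x-w|/r)$. Taking the supremum over $x \in [a,b]$, the uniform bound $\|h_{w,z}\|_{[a,b]} \leq r$ therefore holds if and only if $z$ lies outside every such disc, which is exactly the condition $z \notin X_r$. This proves the first assertion. (The degenerate case $x = w$, where the disc collapses to a point, is harmless since $h_{w,z}(w) = 0 \leq r$.)

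For the sharpness claim, I would use that $X_r$ is open as a union of open discs, so any $z \in \partial X_r$ satisfies $z \notin X_r$, and the first part already gives $\|h_{w,z}\|_{[a,b]} \leq r$. To obtain the matching lower bound, choose a sequence $z_n \to z$ with $z_n \in X_r$; each $z_n$ lies in some disc $\mathcal{D}(x_n, |x_n - w|/r)$ with $x_n \in [a,b]$, so $|x_n - z_n| < |x_n - w|/r$. By compactness of $[a,b]$, I would extract a subsequence with $x_n \to x_0 \in [a,b]$ and pass to the limit, obtaining $|x_0 - z| \leq |x_0 - w|/r$. This gives $|h_{w,z}(x_0)| \geq r$, and combined with the upper bound yields $\|h_{w,z}\|_{[a,b]} = r$.

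The argument is essentially a one-line geometric reformulation followed by a routine compactness step, so there is no real obstacle. The only care needed is in the open-versus-closed disc convention (open is the natural choice so that $z \notin X_r$ corresponds to a non-strict inequality $|x-z| \geq |x-w|/r$), and in verifying that the strict inequality $|x_n - z_n| < |x_n - w|/r$ survives as a non-strict inequality in the limit, which is automatic from continuity.
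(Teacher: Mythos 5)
Your proof is correct and takes essentially the same approach as the paper: reformulate the pointwise bound $|h_{w,z}(x)|\leq r$ as the condition that $z$ lies outside the disc $\mathcal{D}(x,|x-w|/r)$ and take the supremum over $x\in[a,b]$. Your compactness argument for the boundary case rigorously justifies a step the paper simply asserts (namely that a boundary point of $X_r$ satisfies $|z-x|=|x-w|/r$ for some $x\in[a,b]$), but the substance is the same.
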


\begin{proof}
Let \( z \in \mathbb{C}\setminus X_r \) and pick any \( x \in [a,b] \).
 Since \( z \not\in \mathcal{D}(x,|x-w|/r) \) it follows that \( |z-x| > |x-w|/r \) and therefore \( |h_{w,z}(x)| = |x-w|/|x-z| < r \).
Maximizing over \( x \) yields the result.

If \( z\) is on the boundary of \( X_r \),  then for some \( x \in [a,b] \), \( |z-x| = |x-w|/r \), which means that for this \( x \), \(|h_{w,z}(x)| = r \).
\end{proof}
Note that if \( r \leq 1 \) and \( w \in \R\setminus[a,b] \), then the region described in \cref{thm:level_sets} is simply a disc about \( b \) if \( w < a\) or a disc about \( a \) if \( w > b \).  If \( r > 1 \) and \( w \) is real, then the region described is that in the discs about \( a \) and \( b \) and between the two external tangents to these two discs.

\subsection{A priori bounds}

We can use \cref{thm:err_int} to give a priori bounds, as long as we choose \( S_0 \) and \( S_i \), \( i =1, \ldots, k \) independently of $\vec b$ (and in turn $\vec T_k$). 

The simplest possibility is to take \( S_0 = S_i = \mathcal{I}(\vec{A}) \).
In this case, as an immediate consequence of \cref{thm:err_int,thm:level_sets} we have the following a priori bound,
\begin{corollary}
\label{thm:disk}
    Suppose that for some \( w < \lambda_{\textup{min}}(\vec{A}) \), \( f \) is analytic in a neighborhood of \( \mathcal{D}(\lambda_{\textup{max}}(\vec{A}),\lambda_{\textup{max}}(\vec{A})-w) \).
    Then, taking \( \Gamma \) to be the boundary of this disk, 
   \begin{align*}
        \| f(\vec{A})\vec{b} - \lan_k(f) \|
        &\leq \left( \frac{1}{2\pi} \oint_{\Gamma} |f(z)| \cdot |\d{z}| \right) \| \err_k(w) \|
        \\&\leq \left( (\lambda_{\textup{max}}(\vec{A})-w) \: \max_{z\in\Gamma}|f(z)|\right) \| \err_k(w) \|.
    \end{align*}
\end{corollary}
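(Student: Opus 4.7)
The plan is to instantiate \cref{thm:err_int} with the coarsest possible choice of spectrum information, namely \( S_0 = S_1 = \cdots = S_k = \mathcal{I}(\vec{A}) = [a,b] \) where \( a = \lambda_{\textup{min}}(\vec{A}) \) and \( b = \lambda_{\textup{max}}(\vec{A}) \), and then show that the specific contour \( \Gamma \) (the boundary of \( \mathcal{D}(b,b-w) \)) makes each of the factors \( \|h_{w,z}\|_{S_i} \) at most \( 1 \), so that the integral term reduces to \( \frac{1}{2\pi}\oint_\Gamma |f(z)| \, |\d{z}| \).

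First, I would verify that the contour \( \Gamma \) is admissible for \cref{thm:integral_error_vec}: since \( w < a \) the disk \( \mathcal{D}(b,b-w) \) intersects the real axis in the interval \( [w,2b-w] \), which strictly contains \( [a,b] \supseteq \Lambda(\vec{A}) \cup \Lambda(\vec{T}_k) \). By hypothesis \( f \) is analytic in a neighborhood of this disk, so \( f \) is analytic on and inside \( \Gamma \), and \( \Gamma \) encloses the relevant spectra.

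Next, the key geometric step is to show that \( \Gamma \) lies on the boundary of the set \( X_1 \) from \cref{thm:level_sets} (with \( r=1 \) and \( [a,b] \) as above), so that \cref{thm:level_sets} yields \( \|h_{w,z}\|_{[a,b]} \leq 1 \) for every \( z\in\Gamma \). For each \( x \in [a,b] \), the disk \( \mathcal{D}(x,|x-w|) = \mathcal{D}(x,x-w) \) passes through \( w \) (its leftmost boundary point), and the straightforward distance check \((b-x) + (x-w) = b-w\) shows that \( \mathcal{D}(x,x-w) \) is internally tangent to \( \mathcal{D}(b,b-w) \) at \( w \). Thus every such disk is contained in \( \mathcal{D}(b,b-w) \), giving \( X_1 = \mathcal{D}(b,b-w) \), with boundary exactly \( \Gamma \). (The remark immediately following \cref{thm:level_sets} already records this fact.) Consequently \( \|h_{w,z}\|_{S_i} \leq 1 \) for \( i=0,1,\ldots,k \) and every \( z \in \Gamma \).

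With these bounds, \cref{thm:err_int} immediately yields the first inequality
\[
\|f(\vec{A})\vec{b} - \lan_k(f)\| \leq \left(\frac{1}{2\pi}\oint_\Gamma |f(z)|\,|\d{z}|\right) \|\err_k(w)\|.
\]
The second inequality is then a one-line estimate: bound \( |f(z)| \) by \( \max_{z\in\Gamma}|f(z)| \) and use that the circumference of \( \Gamma \) is \( 2\pi(b-w) = 2\pi(\lambda_{\textup{max}}(\vec{A})-w) \). None of the steps look to be a real obstacle — the only substantive observation is the tangency argument identifying \( X_1 \) with the disk \( \mathcal{D}(\lambda_{\textup{max}}(\vec{A}), \lambda_{\textup{max}}(\vec{A})-w) \), and even that is flagged in the remark after \cref{thm:level_sets}.
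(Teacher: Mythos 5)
Your proposal is correct and follows essentially the same route as the paper: instantiate \cref{thm:err_int} with \( S_0 = \cdots = S_k = \mathcal{I}(\vec{A}) \) and invoke \cref{thm:level_sets} to get \( \|h_{w,z}\|_{\mathcal{I}(\vec{A})} = 1 \) on \( \Gamma \), then bound the integral by the contour length times \( \max_\Gamma |f| \). The only difference is that you spell out the internal-tangency argument identifying \( X_1 \) with \( \mathcal{D}(\lambda_{\textup{max}}(\vec{A}),\lambda_{\textup{max}}(\vec{A})-w) \), which the paper leaves to the remark following \cref{thm:level_sets}.
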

\begin{proof}
To obtain the first inequality  observe that \cref{thm:level_sets} with \( [a,b] = \mathcal{I}(\vec A) \) implies \( \|h_{w,z}\|_{\mathcal{I}(\vec{A})} = 1 \) on this contour.
The second inequality follows since the length of \( \Gamma \) is \( 2 \pi (\lambda_{\textup{max}}(\vec{A})-w) \).
\end{proof}
This bound is closely related to \cite[Theorem 6.6]{frommer_guttel_schweitzer_14a} which bounds the error in Lanczos-FA for Stieltjes
functions in terms of the error in the Lanczos approximation for a certain linear system.

    Using that \( \err_0(w) = (\vec{A}-w\vec{I})^{-1} \vec{b} \), we can rewrite \cref{thm:disk} as
\begin{align*}
    \frac{ \| f(\vec A) \vec b - \lan_k(f) \|_2}{\| f(\vec A)\vec b \|_2} \le \max_{z \in \Gamma} |f(z)| \cdot  \frac{(\lambda_{\textup{max}}(\vec{A})- w) \|(\vec{A} - w\vec{I})^{-1} \vec b \|_2}{\| f(\vec A)\vec b \|_2} \cdot \frac{\|\err_k(w)\|_2}{\| \err_0(w) \|_2}. 
\end{align*}
This can be used to obtain simple relative error bounds for many functions. 
For instance, suppose $\vec{A}$ is positive definite,  $f(x) = x^{-q}$ for \( q > 1 \), and $w = c \lambda_{\text{min}}$ for $c\in (0,1)$.
Then $\max_{z\in\Gamma} |z^{-q}| = w^{-q} = c^{-q} \lambda_{\textup{min}}(\vec{A})^{-q}$, \( \| (\vec{A} - w\vec{I})^{-1} \vec{b} \|_2 \leq (\lambda_{\textup{min}}(\vec{A})-w)^{-1} \| \vec{b} \| \) and \( \| \vec{A}^{-q} \vec{b} \|_2 \geq \lambda_{\textup{max}}(\vec{A})^{-q} \| \vec{b} \| \).
We then have the bound\footnote{Slightly stronger bounds can be obtained by bounding \( \| (\vec{A}-w\vec{I})^{-1} \vec{b} \|_2 / \| \vec{A}^{-q} \vec{b} \|_2 \) directly, rather than bounding the numerator and denominator separately.} 
\begin{align*}
    \frac{ \| \vec{A}^{-q} \vec b - \lan_k(f) \|_2}{\| \vec{A}^{-q}\vec b \|_2} 
    & \leq 
    c^{-q} \kappa(\vec{A})^{q} \kappa( \vec{A} - w\vec{I})  \frac{\|\err_k(w)\|_2}{\| \err_0(w) \|_2} .
\end{align*}

\Cref{thm:disk} and the above bound provide simple reductions to the error of solving a positive definite linear system involving \( \vec{A} - w\vec{I} \) using Lanczos. 
However, these bounds may be a significant overestimate in practice.
In particular, for any \( k>1 \), \cref{eqn:dkwz_qwz} cannot be sharp due to the fact that \( \|h_{w,z}\|_{\mathcal{I}(\vec{A})} = \sup_{x \in \mathcal{I}(\vec{A})} |h_{w,z}(x)| \) cannot be attained at every eigenvalue of \( \vec{T}_k \).
In fact, for most values \( \lambda_i ( \vec{T}_k ) \) and most points \( z \in \Gamma \), we expect \( | h_{w,z}( \lambda_i ( \vec{T}_k ) ) | \ll \|h_{w,z}\|_{\mathcal{I}(\vec{A})} \).
\Cref{fig:Q_wz_D_kwz_levels} shows sample level curves for \( \|h_{w,z}\|_{\mathcal{I}(\vec{A})} / |\det(h_{w,z}(\vec{T}_k))|^{1/k} \) which illustrate  the slackness in the bound.

To derive sharper a priori bounds, there are several approaches. 
If more information is known about the eigenvalue distribution of \( \vec{A} \), then the \( S_i \) can be chosen based on this information.
For example, similarly to \cref{eqn:triangle_ineq2}, it is possible to exploit the interlacing property of the eigenvalues of \( \vec{T}_k \).
\begin{example}
Suppose \( \vec{A} \) has eigenvalues in \( [0,1] \) with a single eigenvalue at \( \kappa > 1 \).  Assume \( w \leq 0 \).
Then there is at most one eigenvalue of \( \vec{T}_k \) in \( [1,\kappa] \) so in \cref{thm:err_int} we can pick $S_i = [0,1]$ for $i = 1,\ldots, k-1$ and $S_k = [0,\kappa]$. We have
\begin{align*}
    |\!\det(h_{w,z}(\vec{T}_k))| = \left| \prod_{i=1}^{k} h_{w,z}(\lambda_i(\vec{T}_k)) \right| \leq
      \left(\| h_{w,z} \|_{[0,1]}\right)^{k-1} \|h_{w,z}\|_{[0,\kappa]}.
\end{align*}
If \( z \) is near to \( \kappa \) then \( \| h_{w,z} \|_{[0,1]} \) may be much smaller than \( \| h_{w,z} \|_{[0,\kappa]} \).
\end{example}

Second, the contour \( \Gamma \) can be chosen to try to reduce the slackness in \cref{eqn:dkwz_qwz}.
Intuitively, the slackness is exacerbated when \( z \in \Gamma \) is close to \( S_i \) but far from \( \lambda_i(\vec{T}_k) \).
For instance, for any \( k>1 \),
\begin{align*}
\lim_{|z|\to\infty} \frac{\|h_{w,z}\|_{\mathcal{I}(\vec{A})}^k}{|\det(h_{w,z}(\vec{T}_k))|} \to 1 
,&&\text{and }&&
\forall \lambda \in \mathcal{I}(\vec{A}),~ \lim_{z\to\lambda} \frac{\|h_{w,z}\|_{\mathcal{I}(\vec{A})}^k}{|\det(h_{w,z}(\vec{T}_k))|} \to \infty.
\end{align*}
This behavior is also observed in \cref{fig:Q_wz_D_kwz_levels}.
\begin{figure}[ht]
    \begin{subfigure}{.48\textwidth}\centering
        \includegraphics[width=\textwidth]{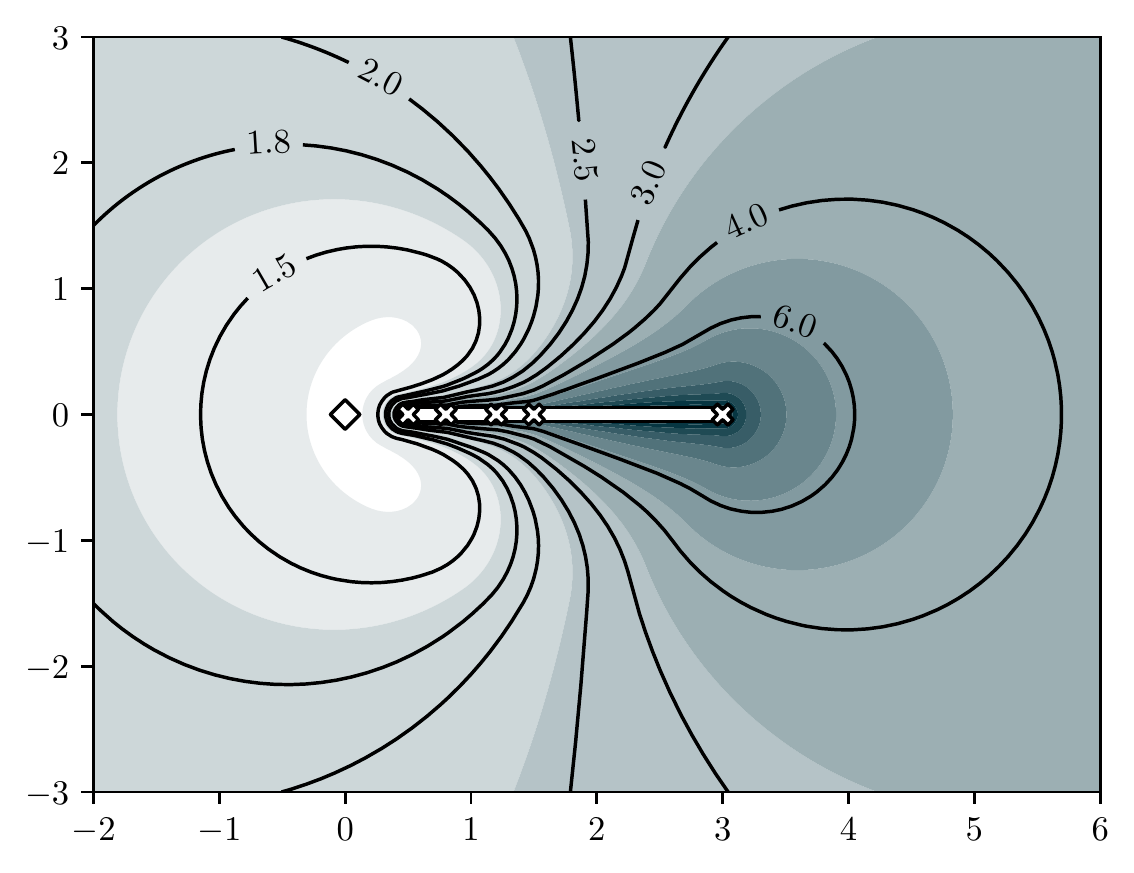}
    \end{subfigure}\hfill 
    \begin{subfigure}{.48\textwidth}\centering
        \includegraphics[width=\textwidth]{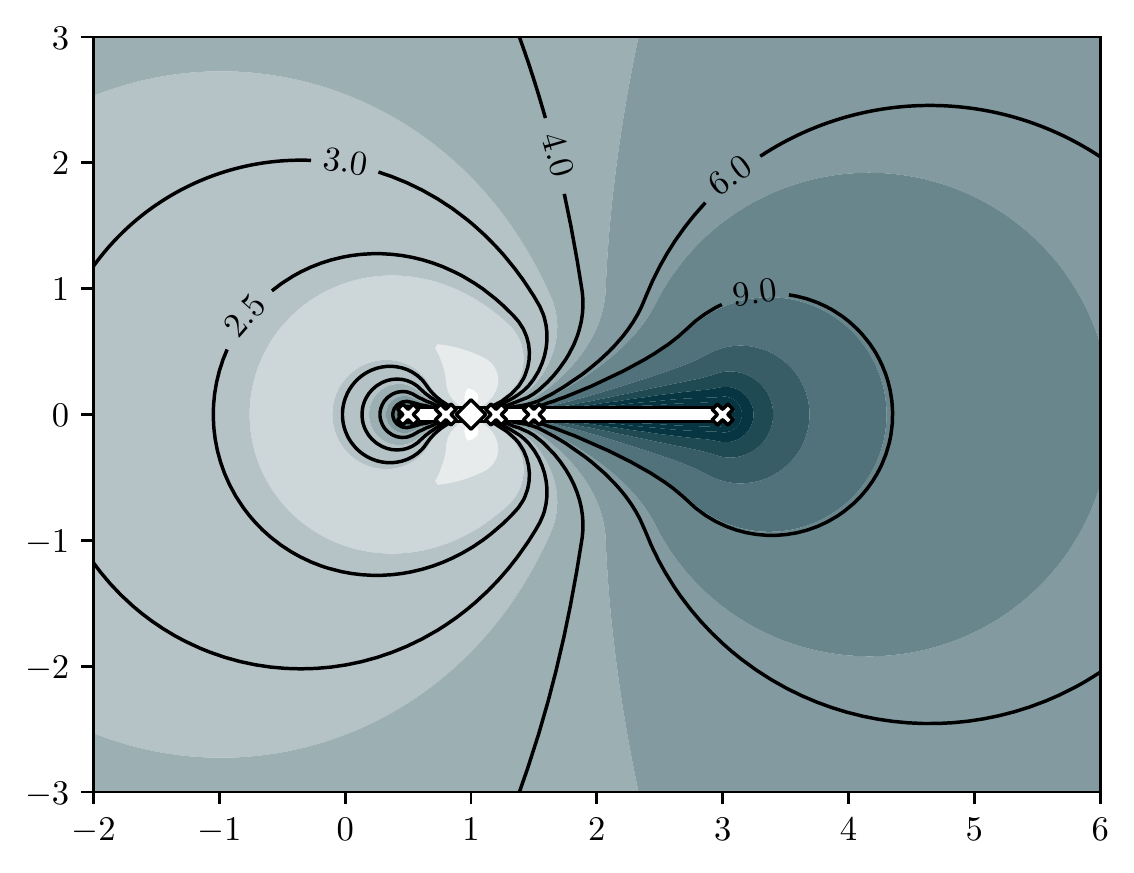}
    \end{subfigure}
    \caption{
        Contour plot of \( \|h_{w,z}\|_{\mathcal{I}(\vec{A})} / |\!\det(h_{w,z}(\vec{T}_k))|^{1/k} \) as a function of $z \in \mathbb{C}$ for a synthetic example with \( \mathcal{I}(\vec{A}) = [0.5,3] \) and \( \Lambda(\vec{T}_k) = \{ 0.5,0.8,1.2,1.5,3 \} \) ($k=5$).
        Here \( w \) is indicated by the white diamond ({\protect\includegraphics[scale=.7]{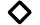}}) and the eigenvalues of \( \vec{T}_k \) are indicated by white x's ({\protect\includegraphics[scale=.7]{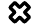}}).
        Larger slackness in \cref{eqn:dkwz_qwz} corresponds to darker regions. 
    }
    \label{fig:Q_wz_D_kwz_levels}
\end{figure}

These observations suggest that we should pick \( \Gamma \) to be far from the spectrum of \( \vec{A} \).
Of course, we are constrained by properties of \( f \) such as branch cuts and singularities.
Moreover, certain contours may increase the slackness in \cref{thm:err_int} itself. 
These considerations are discussed further in \cref{ex:sqrt_contours}.

\subsection{A posteriori error bounds}

After the Lanczos factorization \cref{eqn:lanczos_factorization} has been computed, \( \vec{T}_k \) is known and \( \Lambda(\vec{T}_k) \) can be cheaply computed. 
Thus, in \cref{thm:err_int} we can take \( S_i = \{ \lambda_i(\vec{T}_k) \} \) for \( i =1, \ldots, k \), which is the best possible choice. 
In this case \cref{eqn:dkwz_qwz} is an equality and \( \det(h_{w,z}(\vec{T}_k)) = \det(\vec{T}_k-w)/\det(\vec{T}_k-z) \) can be computed via tridiagonal determinant formulas rather than using the eigenvalues of \( \vec{T}_k \).

If \( \mathcal{I}(\vec{A}) \) is not known, the extreme Ritz values \( \lambda_{\text{min}}(\vec{T}_k) \) and \( \lambda_{\text{max}}(\vec{T}_k) \) can be used to estimate the extreme eigenvalues of \( \vec{A} \) \cite{kuczyski_wozniakowski_92,parlett_simon_stringer_82}.
All together, this means that it is not difficult to efficiently obtain accurate estimates of the bound from \cref{thm:err_int}.

\subsection{Numerical computation of integrals}
\label{sec:numerical_integral}
Typically, to produce an a priori or a posteriori error bound, the integral term in \cref{thm:err_int} must be computed numerically.
Consider a discretization of the integral
\begin{align*}
    f(\vec{A}) = -\frac{1}{2\pi i} \oint_{\Gamma} f(z) (\vec{A}-z\vec{I})^{-1} \d{z}
\end{align*}
using nodes \( z_i \) and weights \( w_i \), \( i=1,2,...,q \). 
This yields a rational matrix function
\begin{align*}
    \label{eqn:rational_func}
    r_q(\vec{A}) := -\frac{1}{2\pi i} \sum_{i=1}^{q}  w_i f(z_i) (\vec{A}-z_i\vec{I})^{-1}. 
\end{align*}
Using the triangle inequality, we can write
\begin{align}
\hspace{5em}&\hspace{-5em}\nonumber
\| f(\vec{A}) \vec{b} - \lan_k(f) \|
\\&\leq 
\nonumber
\| f(\vec{A}) \vec{b} - r_q(\vec{A}) \vec{b} \| + \| r_q(\vec{A}) \vec{b} - \lan_k(r_q)) \| + \| \lan_k(r_q) - \lan_k(f) \|
\\&\leq 2  \left( \max_{x \in \Lambda(\vec{A}) \cup \Lambda(\vec{T}_k)} | f(x) - r_q(x) | \right) \|\vec{b}\|  + \| r_q(\vec{A}) \vec{b} - \lan_k(r_q) \|. \label{eqn:triangle_ineq_rational}
\end{align}
Now, observe that analogous to \cref{thm:err_int},
\begin{align}
    \label[ineq]{eqn:err_int_rational}
    \| r_q(\vec{A})\vec{b} - \lan_k(r_q) \| \leq \left( \frac{1}{2\pi} \sum_{i=1}^{q} w_i \cdot |f(z_i)| \cdot \left( \prod_{i=1}^{k} \|h_{w,z}\|_{S_i} \right) \cdot \|h_{w,z}\|_{S_0} \right) \| \err_k(w) \|.
\end{align}
If we use the same nodes and weights to evaluate the integral term in \cref{thm:err_int}, we obtain exactly the expression on the right hand side of \cref{eqn:err_int_rational}.
Thus, this discretization of \cref{thm:err_int} is a true upper bound for the Lanczos-FA error to within an additive error of size equal to twice the approximation error of \( r(x) \) to \( f(x) \) on \( \Lambda(\vec{A}) \cup \Lambda(\vec{T}_k) \) times \( \| \vec{b} \| \).
In many cases, we expect exponential convergence of \( r_q \) to \( f \), which implies that this term can be made less than any desired value $\epsilon > 0$ using a number of quadrature nodes that grows only as the logarithm of \( \epsilon^{-1} \) \cite{hale_higham_trefethen_08,trefethen_weideman_14}.

We note that fast convergence of \( r_q \) to \( f \) suggests that, instead of applying Lanczos-FA, we can approximate \( f(\vec{A}) \vec{b} \) by first finding \( r_q \) and then solving a small number of linear systems \( (\vec{A} - z_i \vec{I}) \vec{x}_i = \vec{b} \) to compute \( r_q(\vec{A})\vec{b} \).
Solving these systems with any fast linear system solver yields an algorithm for approximating \( f(\vec{A})\vec{b} \) inheriting, up to logarithmic factors in the error tolerance, the same convergence guarantees as the linear system solvers used.
A recent example of this approach is found in \cite{jin_sidford_19} which uses a modified version of stochastic variance reduced gradient (SVRG) to obtain a nearly input sparsity time algorithm for \( f(\vec{A})\vec{b} \) when \( f \) corresponds to principal component projection or regression.

A range of work suggests using a Krylov subspace method and the shift invariance of the Krylov subspace to solve these systems and compute $r_q(\vec A)\vec b$ explicitly.
This was studied in \cite{frommer_kahl_lippert_rittich_13,frommer_simoncini_09} for the Lanczos method, and in \cite{pleiss_jankowiak_eriksson_damle_gardner_20} for MINRES, the latter of which uses the results of \cite{hale_higham_trefethen_08} to determine the quadrature nodes and weights. 
However, as the above argument demonstrates, the limit of the Lanczos-based approximation as the discretization becomes finer is simply the Lanczos-FA approximation to \( f(\vec{A}) \vec{b} \).
Therefore, there is no clear advantage to such an approach over Lanczos-FA in terms of the convergence properties, unless preconditioning is used.
.

On the other hand, there are some advantages to these approaches in terms of computation.
Indeed, Krylov solvers for symmetric/Hermitian linear systems require just $O(n)$ storage; i.e. they do not require more storage as more iterations are taken. 
A naive implementation of Lanczos-FA requires $O(kn)$ storage, and while Lanczos-FA can be implemented to use \( O(n) \) storage by taking two passes, this has the effect of doubling the number of matrix-vector products required.
See \cite{guttel_schweitzer_21} for a recent overview of limited-memory Krylov subspace methods.

\section{Examples and numerical verification}
\label{sec:examples}

We next present examples in which we apply  \cref{thm:err_int} to give a posteriori and a priori error bounds for approximating common matrix functions with Lanczos-FA. These examples illustrate the general approaches to applying \cref{thm:err_int} described in \cref{sec:results}. 
All integrals are computed either analytically or using SciPy's \texttt{integrate.quad} which is a wrapper for QUADPACK routines. 

In all cases, we exactly compute the $\| \err_k(w) \|$ term in the bounds. In practice, one would bound  this quantity a priori or a posteriori using existing results on bounding the Lanczos error for linear system solves. By computing the error exactly, we separate any looseness due to our bounds from any looseness due to an applied bound on  $\| \err_k(w) \|$.

\begin{example}[Matrix square root]
\label{ex:sqrt_contours}
Let \( \vec{A} \) be positive definite and  \( f(x) = \sqrt{x} \).
Perhaps the simplest bound is obtained by using \cref{thm:err_int} with $w = 0$, \( S_i = \mathcal{I}(\vec{A}) \) and \( \Gamma \) chosen as the boundary of the disk \( \mathcal{D}(\lambda_{\text{max}}(\vec{A}), \lambda_{\text{max}}(\vec{A}) ) \).
We then obtain a bound via \cref{thm:disk}. 
However, this bound may be loose -- note that except through $\|\err_k(w)\|$, it does not depend on the number of iterations $k$.  Thus it cannot establish convergence at a rate faster than that of solving a linear system with coefficient matrix \( \vec{A} \).

\begin{figure}[h]
\centering
\begin{subfigure}{.25\textwidth}\centering
\includegraphics[scale=.7]{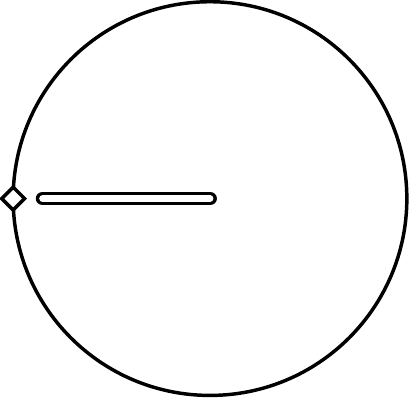}
\caption{circle contour}
\label{fig:circle}
\end{subfigure}
\hfill
\begin{subfigure}{.25\textwidth}\centering
\includegraphics[scale=.7]{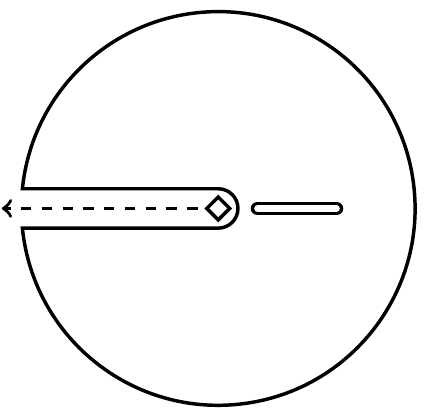}
\caption{Pac-Man contour}
\label{fig:pacman}
\end{subfigure}
\hfill
\begin{subfigure}{.35\textwidth}\centering
\includegraphics[scale=.7]{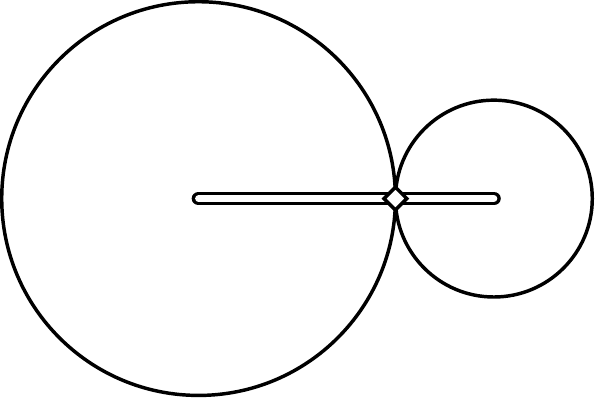}
\caption{double circle contour}
\label{fig:double_circle}
\end{subfigure}
\caption{
Circle, Pac-Man and double circle contours described in \cref{ex:sqrt_contours,ex:cif_pw} respectively.
All three figures show \( \mathcal{I}(\vec{A}) \) ({\protect \includegraphics[scale=.7]{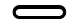}}) and \( w \) ({\protect\includegraphics[scale=.7]{imgs/legend/diamond.pdf}}).
}
\label{fig:contours}
\end{figure}

Keeping $w = 0$, we can obtain tighter bounds by letting $\Gamma$ be a ``Pac-Man'' like contour that consists of a large circle about the origin of radius \( R \) with a small circular cutout of radius \( r \) that excludes the origin and a small strip cutout to exclude the negative real axis. 
    That is, as shown in \cref{fig:pacman}, the boundary of the set,
\begin{align*}
    \mathcal{D}(0,R) \setminus ( \{ z : \Re(z) \leq 0, |\Im(z)| < r \} \cup \mathcal{D}(0,r) ).
\end{align*}

\begin{figure}[ht]
    \begin{subfigure}{.48\textwidth}\centering
        \includegraphics[width=\textwidth]{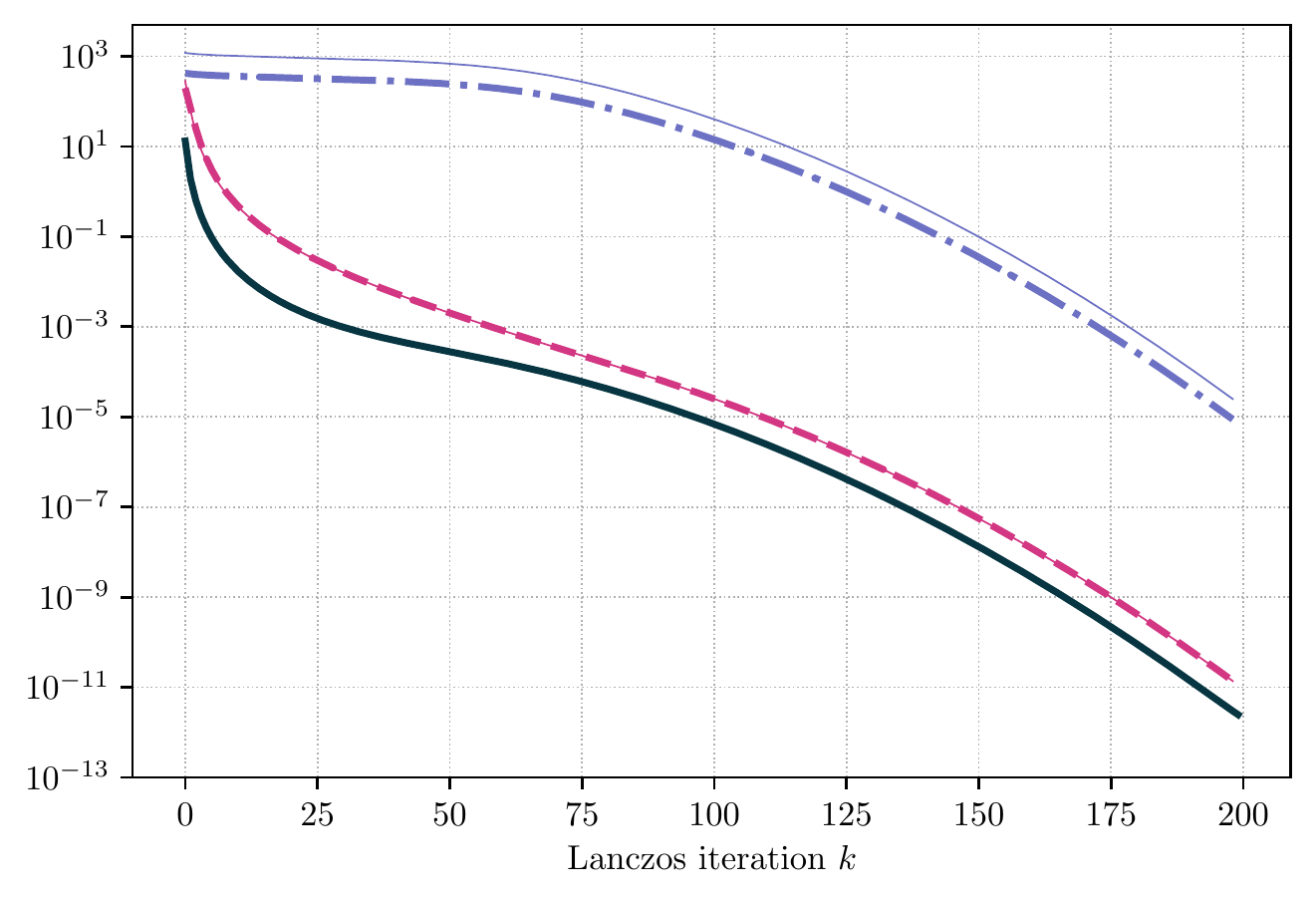}
        \caption{circular contour (\( c = r = \lambda_{\text{max}}(\vec{A}) \))}
    \end{subfigure}\hfill 
    \begin{subfigure}{.48\textwidth}\centering
        \includegraphics[width=\textwidth]{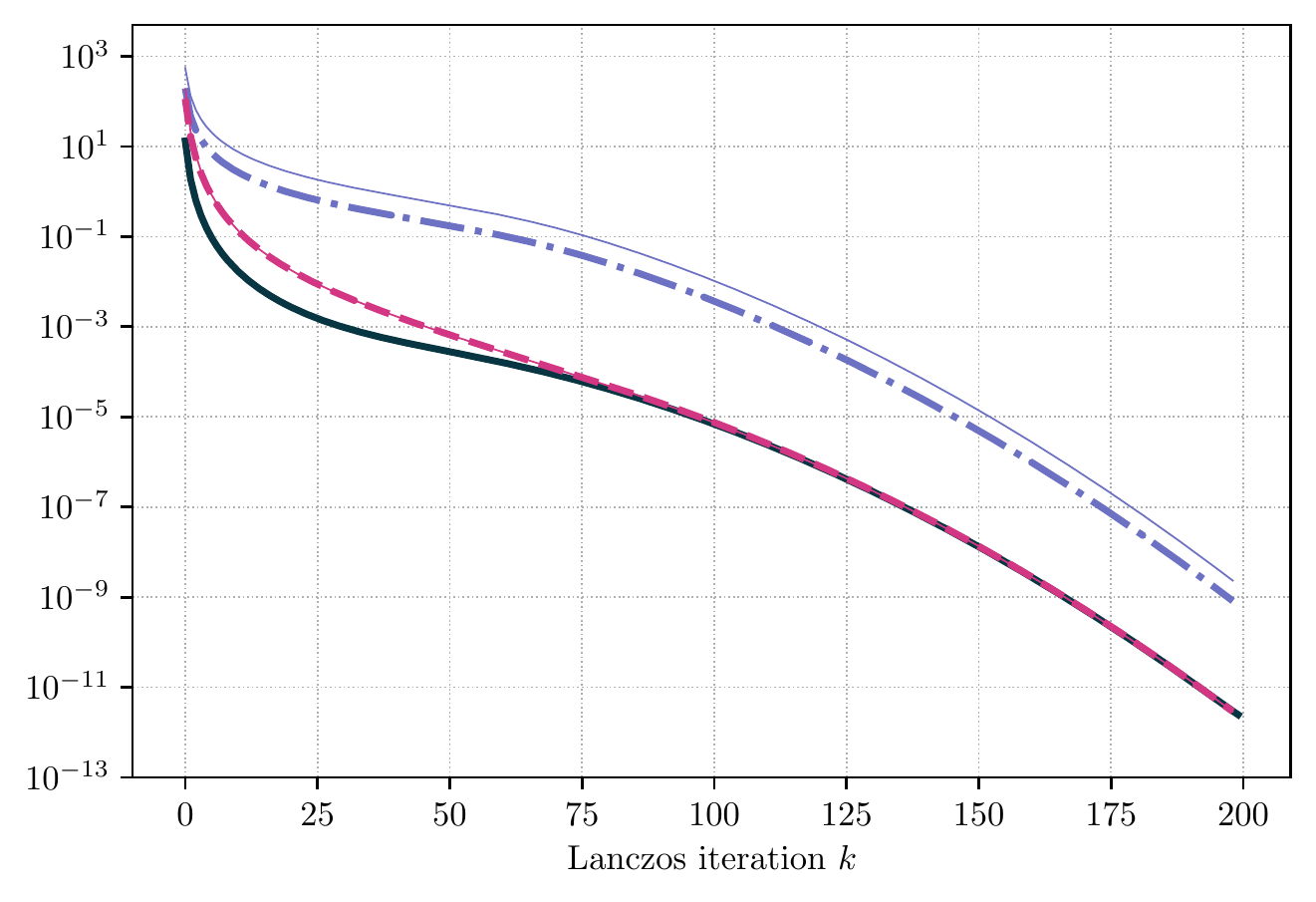}
        \caption{Pac-Man contour ($r \rightarrow 0$, $R \rightarrow \infty$)}
    \end{subfigure}
    \caption{
        \( \vec{A} \)-norm error bounds for \( f(x) = \sqrt{x} \) where \( \vec{A} \) has \( n=1000 \) eigenvalues spaced uniformly in \( [10^{-2},10^2] \).
        \emph{Legend}: 
        True Lanczos-FA error \( \| f(\vec{A})\vec{b} - \lan_k(f) \| \) ({\protect\raisebox{0mm}{\protect\includegraphics[scale=.7]{imgs/legend/solid_blue.pdf}}}).
        A priori bounds obtained by using \cref{thm:err_int} with \( S_0 = S_i = \mathcal{I}(\vec{A})  \) ({\protect\raisebox{0mm}{\protect\includegraphics[scale=.7]{imgs/legend/dashdot_purple.pdf}}}) and 
        \( S_0 = S_i = \tilde{\mathcal{I}}(\vec{A}) = [ \lambda_{\text{min}}(\vec{A})/2, 2 \lambda_{\text{max}}(\vec{A})] \)
        ({\protect\raisebox{0mm}{\protect\includegraphics[scale=.7]{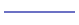}}}).
        A posteriori bounds obtained by using \cref{thm:err_int} with \( S_0 = \mathcal{I}(\vec{A}) \), \( S_i = \{ \lambda_i(\vec{T}_k) \} \) ({\protect\raisebox{0mm}{\protect\includegraphics[scale=.7]{imgs/legend/dash_pink.pdf}}}), and
        \( S_0 = \tilde{\mathcal{I}}(\vec{A}) \), \( S_i = \{ \lambda_i(\vec{T}_k) \} \)
        ({\protect\raisebox{0mm}{\protect\includegraphics[scale=.7]{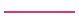}}}).
        Observe that using the wider interval \( \tilde{\mathcal{I}}(\vec{A}) \) has very little effect on both the a priori and a posteriori bounds. Also observe that the a posteriori bounds closely match the actual convergence of Lanczos-FA. 
    }
    \label{fig:sqrt_error_bounds}
\end{figure}
As the outer radius \( R \rightarrow \infty \), the integral over the large circular arc goes to \( 0 \) since \( \|h_{w,z}\|_{\mathcal{I}(\vec{A})} = O(R^{-1}) \), \( | f(z) | = O(R^{1/2}) \), and the length of the circular arc is on the order of \( R \). 
Thus, the product \( f(z) (\| h_{w,z} \|_{\mathcal{I}(\vec{A})})^{k+1} \) goes to \( 0 \) as \( R \rightarrow \infty \), for all \( k \geq 1 \).  
Similarly, as \( r\to 0 \), the length of the small arc goes to zero.
Therefore, we need only consider the contributions to the integral on \( [ -R \pm ir, \pm ir] \) in the limit \( R\to \infty, r\to 0 \).

In this case, when $S_i = \mathcal{I}(\vec A)$ for all $i$, we can compute the value of the integral term in \cref{thm:err_int} analytically. We have
\begin{align*}
\| f( \vec{A} ) \vec{b} - \lan_k (f) \| 
&\leq \left(\frac{1}{2 \pi} \int_{-\infty}^0 | (x \pm 0 i)^{1/2} | \cdot  \| h_{w,x \pm 0 i} \|_{\mathcal{I}(\vec{A})}^{k+1} \,\d{x}  \right) \| \err_k \| 
\\&= \left(\frac{1}{2 \pi} \int_{-\infty}^0 | x \pm 0 i|^{1/2}\frac{ \lambda_{\text{max}} ( \vec{A})^{k+1}}{( \lambda_{\text{max}} ( \vec{A} ) - x )^{k+1}}\,\d{x}   \right) \| \err_k \| 
\\&= \left(\frac{1}{\pi} \lambda_{\text{max}} ( \vec{A} )^{k+1}  \int_0^{\infty} \frac{y^{1/2}}{( \lambda_{\text{max}} ( \vec{A} ) + y )^{k+1}}\d{y} \right) \| \err_k \| 
\\&= \left( \frac{\lambda_{\text{max}}(\vec{A})^{3/2}}{2\sqrt{\pi}} \frac{\Gamma(k-1/2)}{\Gamma(k+1)}  \right) \| \err_k \|,
\end{align*}
where we have made the change of variable \( y = -x \). 
Note that
\begin{align*}
    \lim_{k \to \infty }k^{3/2}  \frac{\Gamma(k-1/2)}{\Gamma(k+1)} = 1.
\end{align*}
This proves that \( \lan_k ( \sqrt{\cdot}) \) converges somewhat faster than the Lanczos algorithm applied to the corresponding linear system \( \vec{A} \vec{x} = \vec{b} \).

In \cref{fig:sqrt_error_bounds}, we plot the bounds from \cref{thm:err_int} for the circular and Pac-Man contours described above.
For both contours we consider \( S_i = \mathcal{I}(\vec{A}) \) for all \( i \), as well as bounds based on an overestimate of this interval,  \( S_i = \tilde{\mathcal{I}}(\vec{A}) \) where 
\(
    \tilde{\mathcal{I}}(\vec{A}) = [ \lambda_{\text{min}}(\vec{A})/2, 2 \lambda_{\text{max}}(\vec{A})].
\)
This provides some sense of how sensitive the bounds are to the choice of \( S_i \) when $S_i$ is a single interval. For a posteriori bounds, we set \( S_i \) to $\{\lambda_i(\vec T_k)\}$ for \( i > 0 \).

We remark that the bounds from \cref{thm:err_int} are upper bounds for \cref{eqn:integral_error} which implies that the slackness of \cref{eqn:integral_error} is relatively small.
This suggests that the roughly 6 orders of magnitude improvement in \cref{thm:err_int} when moving from the circular contour to the Pac-Man contour is primarily due to reducing the slackness in \cref{eqn:dkwz_qwz}, aligning with our intuition.
\end{example}

\begin{figure}[htb]
    \begin{subfigure}[t]{.48\textwidth}\centering
        \includegraphics[width=\textwidth]{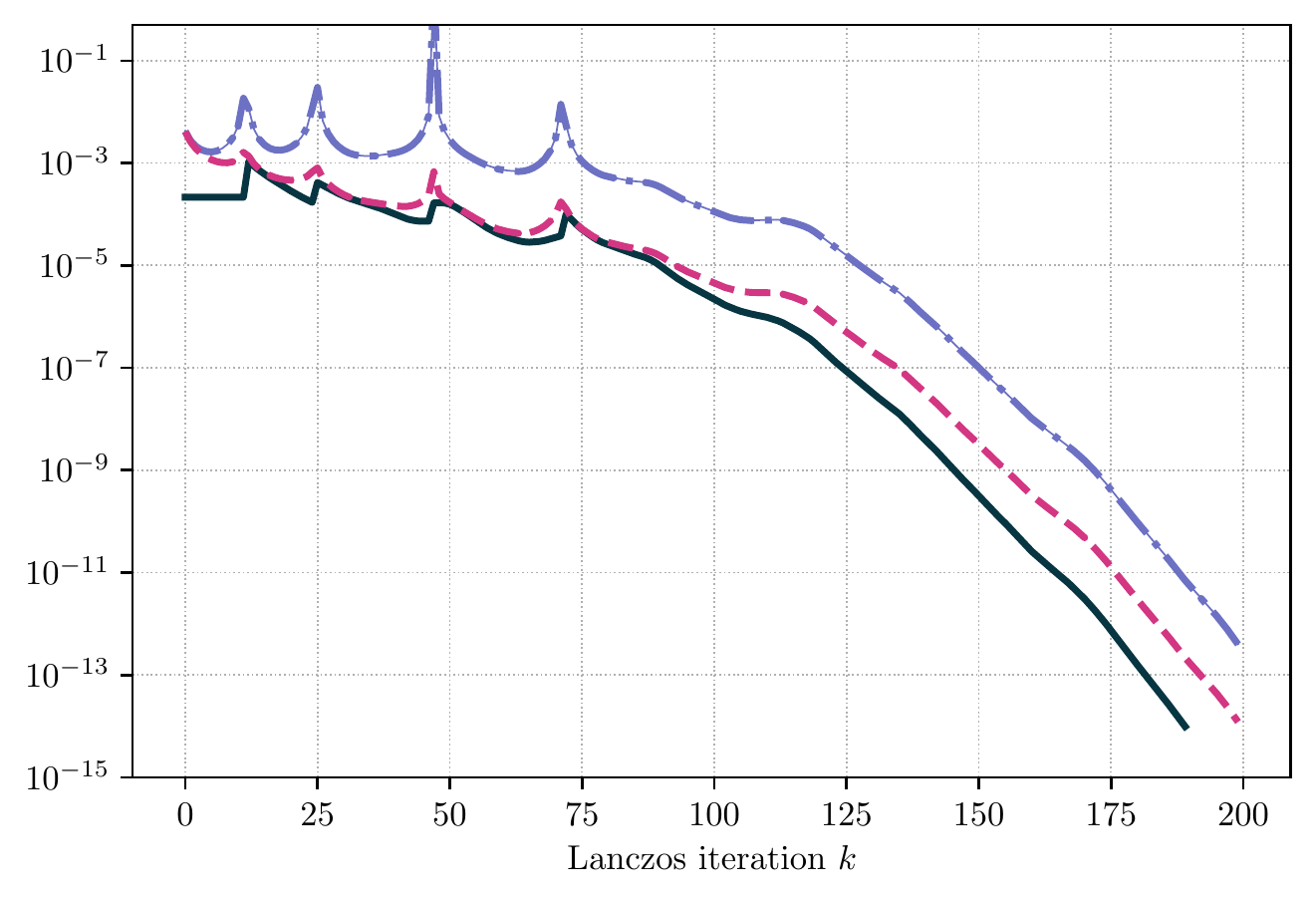}
        \caption{\((\vec{A}-w\vec{I})^2 \)-norm for \( f(x) = \operatorname{step}(x-a)/x \) where \( \vec{A} = \vec{X}\vec{X}^\cT \) and the entries of \( \vec{X} \in\mathbb{R}^{n,2n} \) are independent Gaussians with mean zero and variance \( 1/2n \) with \( n = 3000 \). 
        We set \( a=0.99 \lambda_{\text{max}}(\vec{A}) \) so that there are roughly $5$ eigenvalues above \( a \).}
    \end{subfigure}\hfill 
    \begin{subfigure}[t]{.48\textwidth}\centering
        \includegraphics[width=\textwidth]{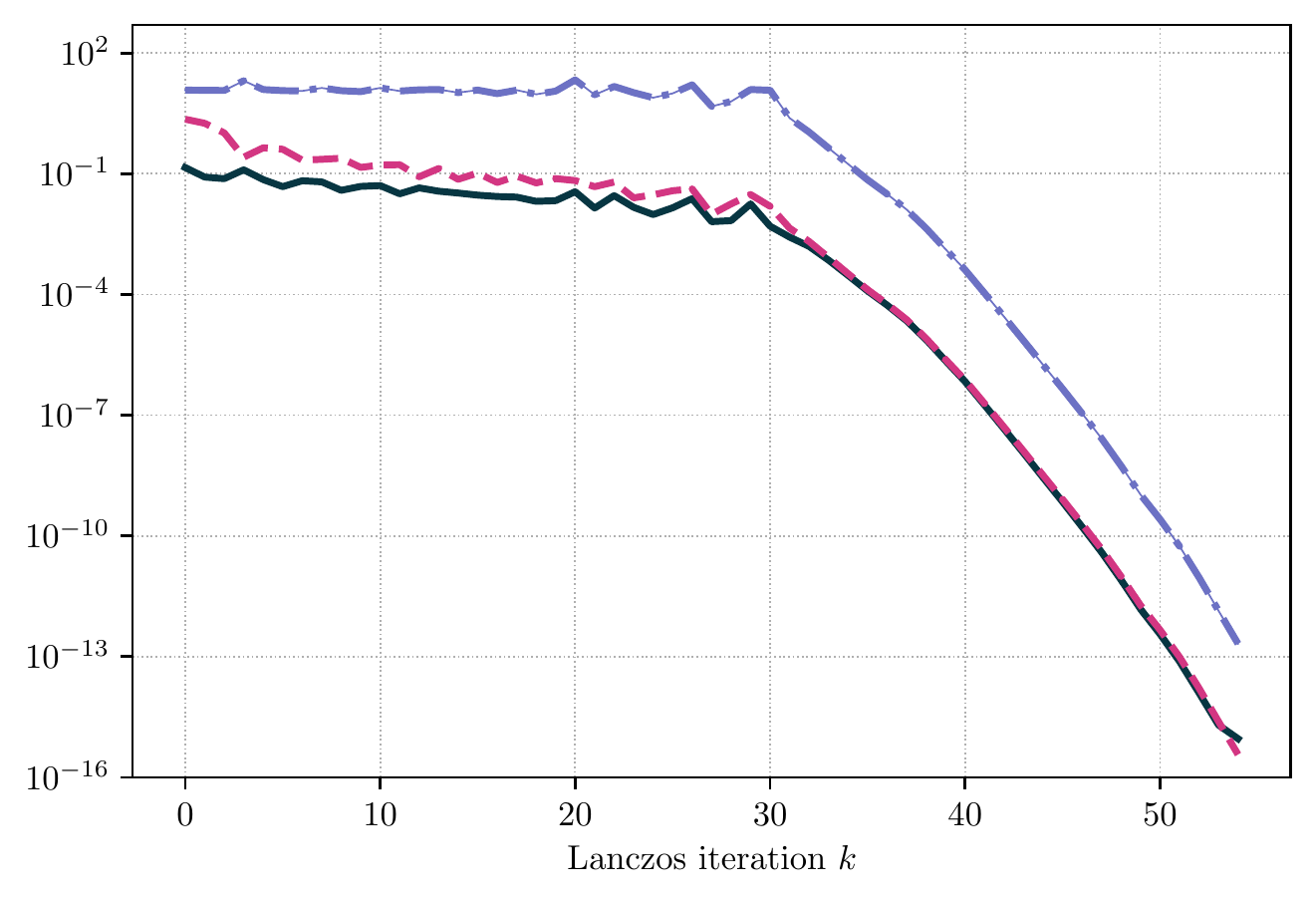}
        \caption{\( 2 \)-norm for \( f(x) = \operatorname{step}(x-a) \) where \( \vec{A} \) is the MNIST training data \cite{lecun_cortes_burges_10} covariance matrix and \( a=0.15 \lambda_{\text{max}}(\vec A) \) so that there are 16 eigenvalues above \( a \). 
        }
    \end{subfigure}
    \caption{
        Bounds for piece-wise analytic functions using the double-circle contour described in \cref{ex:cif_pw}.
        \emph{Legend}: 
        True Lanczos-FA error ({\protect\raisebox{0mm}{\protect\includegraphics[scale=.7]{imgs/legend/solid_blue.pdf}}}). 
        A priori bounds obtained by using \cref{thm:err_int} with \( S_0 = S_i = \mathcal{I}(\vec{A}) \) ({\protect\raisebox{0mm}{\protect\includegraphics[scale=.7]{imgs/legend/dashdot_purple.pdf}}}) or \cref{eqn:ex_cif_pw_a_priori2}  
        ({\protect\raisebox{0mm}{\protect\includegraphics[scale=.7]{imgs/legend/thin_purple.pdf}}}) with \( S_0 = S_i = \mathcal{I}(\vec{A}) \).
        Note that these curves are on top of one another suggesting there is very little loss going from \cref{thm:err_int} to the much easier to evaluate \cref{eqn:ex_cif_pw_a_priori2}.
        An a posteriori bound obtained by using \cref{thm:err_int} with \( S_0 = \mathcal{I}(\vec{A}) \) and \( S_i = \{ \lambda_i(\vec{T}_k) \} \) ({\protect\raisebox{0mm}{\protect\includegraphics[scale=.7]{imgs/legend/dash_pink.pdf}}}). Observe that all bounds, especially the a posteriori ones  closely match the true convergence of Lanczos-FA.
    }
    \label{fig:lanczos_CIF_error_bounds}
\end{figure}
Our next example illustrates the application of \cref{thm:err_int} to several common piecewise analytic functions.
Functions of this class have found widespread use throughout scientific computing and data science but have proven particularly difficult to analyze using existing approaches \cite{di_napoli_polizzi_saad_16,frostig_musco_musco_sidford_16,jin_sidford_19,eshof_frommer_lippert_schilling_van_der_vorst_02}.

\begin{example}[Step and Absolute Value Functions]
\label{ex:cif_pw}
Let \( f(x) \) be one of \( |x-a| \), \( \operatorname{step}(x-a) \), or \( \operatorname{step}(x-a)/x \) for \( a \in \mathcal{I}(\vec A) \), where, for \( z \in \mathbb{C} \) we define $\operatorname{step}(z) := 0$ for $\text{Re}(z) < 0$ and $\operatorname{step}(z) := 1$ for $\text{Re}(z) \ge 0$.  Also, for \( z \in \mathbb{C} \), we replace \( |x-a| \) by \( z-a \) if \( \text{Re}(z) > a \) and by \( a-z \) if \( \text{Re}(z) \leq a \). 
Note that the latter two functions correspond to principle component projection and principle component regression respectively.
In the case of principle component regression, we $\vec{A}$ is positive semi-definite.
The step function is also closely related to the sign function, which is widely used in quantum chromodynamics to compute the overlap operator \cite{eshof_frommer_lippert_schilling_van_der_vorst_02}.

Next, take \( w = a \) and define \( \Gamma_1 \) and \( \Gamma_2 \) as the boundaries of the disks \( \mathcal{D}_1 := \mathcal{D} ( \lambda_{\text{min}}(\vec{A}), w - \lambda_{\text{min}}(\vec{A}) - \varepsilon) \) and \( \mathcal{D}_2 := \mathcal{D} ( \lambda_{\text{max}}(\vec{A}) , \lambda_{\text{max}}(\vec{A}) - w - \varepsilon ) \), for some sufficiently small \( \varepsilon > 0 \). 
Then \( f \) is analytic in a neighborhood of the union of these two disks, so assuming none of the eigenvalues of $\vec{A}$ or $\vec{T}_k$ are equal to $a$, we can apply \cref{thm:level_sets}.

Note that \( \|h_{w,z}\|_{\mathcal{I}(\vec{A})} \to 1 \) as \( z \to w \) from outside \( [a,b] \), avoiding a potential singularity which would occur if the contour $\Gamma$ passed through $\mathcal{I}(\vec{A})$ at any other points.
In fact, ignoring the contribution of $\epsilon$, \( \| h_{w,z} \|_{\mathcal{I}(\vec{A})} = 1 \) for all \( z\in \Gamma_1 \) and for all \( z\in\Gamma_2 \).
Thus, \cref{thm:disk} can be written as
\begin{align}
    \| f(\vec{A}) \vec{b} - \lan_k(f) \|
    &\leq \left(\frac{1}{2\pi} \sum_{j=1}^{2}  |\Gamma_j| \max_{z\in\Gamma_j} | f(z) | \right) \| \err_k(w) \| \label[ineq]{eqn:ex_cif_pw_a_priori2}.
\end{align}
The values of this bound for all three functions are summarized in \cref{tab:cif_piecewise}.

In \cref{fig:lanczos_CIF_error_bounds}, we plot the bounds from \cref{thm:err_int} for the contour described above with \( S_i = \mathcal{I}(\vec{A}) \).

\begin{table}[htb]
 \begin{align*}
\begin{array}{rlll}
    \toprule
    f(x) & f(z), z \in \Omega_1 & f(z), z \in \Omega_2 & \frac{1}{2 \pi} \sum_{j=1}^{2} | \Gamma_j | \max_{z\in\Gamma_j} | f(z) | \\\midrule
    |x-a| & a-z & z-a &  2(a- \lambda_{\text{min}} )^2 + 2(\lambda_{\text{max}}-a)^2  \\
    \operatorname{step}(x-a) & 0 & 1 &  (\lambda_{\text{max}}-a) \\
    \operatorname{step}(x-a)/x & 0 & 1/z & (\lambda_{\text{max}}-a)/a\\
    \bottomrule
\end{array}
\end{align*}
\caption{Values of the factor in parentheses on the right-hand side of \cref{eqn:ex_cif_pw_a_priori2} (ignoring $\varepsilon$) for several common piecewise analytic functions.}
\label{tab:cif_piecewise}
\end{table}

If \( w \in \mathcal{I}(\vec{A}) \)  we note that \( \| \err_k(w) \| \) corresponds to the indefinite linear system \( (\vec{A} - w \vec{I}) \vec{x} = \vec{b} \), so standard results for the Conjugate Gradient algorithm are not applicable.
However, the residual of this system can still be computed exactly once the Lanczos factorization \cref{eqn:lanczos_factorization} has been obtained, and as we prove in \cref{sec:linear_systems}, a priori bounds for the convergence of MINRES \cite{cullum_greenbaum_96} can be extended to the Lanczos algorithm for indefinite systems.
It is also clear that, at the cost of having to compare against the error of multiple different linear systems, functions which are piecewise analytic on more than two regions can be handled.

\end{example}

\section{Finite precision}
\label{sec:finite_precision}

While reorthogonalization  in the Lanczos method (\cref{alg:lanczos}) is unnecessary in exact arithmetic, omitting it may result in drastically different behavior when using finite precision arithmetic; see for instance \cite{meurant_strakos_06}.
In the context of Lanczos-FA, the two primary effects are (i) a delay of convergence (increase in the number of iterations to reach a given level of accuracy) and (ii) a reduction in the maximal attainable accuracy.
These effects are reasonably well understood in the context of linear systems \cite{greenbaum_89,greenbaum_97a}, i.e., \( f(x) = 1/x \), and for some other functions such as the matrix exponential \cite{druskin_greenbaum_knizhnerman_98}. However, general theory is  limited. A notable exception is \cite{musco_musco_sidford_18}, which argues that the uniform error bound for Lanczos-FA \cref{eqn:poly_unif} holds to a close degree in finite precision arithmetic.

When run without reorthogonlization, \cref{alg:lanczos} will produce \( \vec{Q}_k \) and \( \vec{T}_k \) satisfying a perturbed three term recurrence
\begin{align}
    \label{eqn:lanczos_factorization_fp}
    \vec{A} \vec{Q}_k = \vec{Q}_k \vec{T}_k + \beta_k \vec{q}_{k+1} \vec{e}_k^{\T} + \vec{F}_k,
\end{align}
where \( \vec{F}_k \) is a perturbation term.
Moreover, the columns of \( \vec{Q}_k \) may no longer be orthogonal.
A priori bounds on the size of \( \vec{F}_k \) and the loss of orthogonality between successive Lanczos vectors have been established in a series of works by Paige \cite{paige_71,paige_76,paige_80,paige_19}.
These quantities can also be computed easily once \( \vec{Q}_k \) and \( \vec{T}_k \) have been obtained, allowing for easy use with our bounds.

\subsection{Effects of finite precision on our error bounds for Lanczos-FA}

Note that using the divide and conquer algorithm from \cite{gu_eisenstat_95} to compute the eigendecomposition of the tridiagonal matrix \( \vec{T}_k \), we can quickly and stably compute \( \vec{Q}_k f(\vec{T}_k) \vec{e}_1 \).
A detailed analysis of this is given in \cite[Appendix A]{musco_musco_sidford_18}.

While the tridiagonal matrix \( \vec{T}_k \) and the matrix \( \vec{Q}_k \) of Lanczos vectors produced in finite precision arithmetic may be very different from those produced in exact arithmetic, we now show that our error bounds, based on the \( \vec{T}_k \) and \( \vec{Q}_k \) actually produced, still hold to a close approximation.
First, we argue that \cref{thm:shifted_lanczos_equivalence} holds to a close degree provided \( \vec{F}_k \) is not too large.
Towards this end, note that we have the shifted perturbed recurrence,
\begin{align}
    \label{eqn:shifted_lanczos_factorization_fp}
    ( \vec{A} - z \vec{I} ) \vec{Q}_k
    &= \vec{Q}_k ( \vec{T}_k - z \vec{I} ) + \beta_k \vec{q}_{k+1} \vec{e}_k^{\T} + \vec{F}_k.
\end{align}

From \cref{eqn:shifted_lanczos_factorization_fp}, it is then clear that,
    \begin{align*}
        (\vec{A}-z \vec{I}) \vec{Q}_k (\vec{T}_k - z \vec{I})^{-1} \vec{e}_1
        &= \vec{Q}_k \vec{e}_1 + \beta_k \vec{q}_{k+1} \vec{e}_k^{\T} (\vec{T}_k - z \vec{I})^{-1} \vec{e}_1 + \vec{F}_k (\vec{T}_k - z \vec{I})^{-1} \vec{e}_1.
    \end{align*}

This implies that \cref{thm:shifted_linear_system_error} also holds closely.
More specifically,
\begin{align*}
    \Res_k(z)
    &= \det(h_{w,z}(\vec{T}_k)) \Res_k(w)
    + \vec{f}_k(w,z)
\\
    \err_k(z)
    &= \det(h_{w,z}(\vec{T}_k))  h_{w,z}(\vec{A}) 
    \err_k(w)
    + (\vec{A}-z\vec{I})^{-1} \vec{f}_k(w,z)
\end{align*}
where
\begin{align*}
    \vec{f}_k(w,z) := 
    \vec{F}_k \left( (\vec{T}_k - z\vec{I})^{-1} - \det(h_{w,z}(\vec{T}_k)) (\vec{T}_k-w\vec{I)}^{-1} \right)\vec{e}_1.
\end{align*}

Using this we have,
\begin{align*}
    f(\vec{A})\vec{b} - \lan_k(f)
    &= - \frac{1}{2\pi i} \oint_{\Gamma} f(z) \err_k(z) \d{z} - \frac{1}{2\pi i} \oint_{\Gamma} f(z) (\vec{A} - z\vec{I})^{-1} \vec{f}_k(w,z) \d{z}
\end{align*}
which we may bound using the triangle inequality as
\begin{align*}
    \| f(\vec{A})\vec{b} - \lan_k(f) \|
    &\leq \frac{1}{2\pi} \left\| \oint_{\Gamma} f(z) \err_k(z) \d{z} \right \| +  \frac{1}{2\pi} \left\|  \oint_{\Gamma} f(z) (\vec{A} - z\vec{I})^{-1} \vec{f}_k(w,z) \d{z} \right\|.
\end{align*}
This expression differs from \cref{thm:err_int} only by the presence of the term involving \( \vec{f}_k(w,z) \) (and, of course, by the fact that \( \err_k(z) \) now denotes the error in the finite precision computation).
If we take \( \| \cdot \| \) as the \( (\vec{A}-w\vec{I})^2 \)-norm, then this additional term can be bounded by,
\begin{align}
    \hspace{4em}&\hspace{-4em} \nonumber
    \frac{1}{2\pi} \left\|  \oint_{\Gamma} f(z) (\vec{A} - z\vec{I})^{-1} \vec{f}_k(w,z) \d{z} \right\|
    \\&\leq \frac{1}{2\pi} \oint_{\Gamma} |f(z)|  \cdot \| (\vec{A}-w\vec{I})(\vec{A} - z\vec{I})^{-1} \|_2 \cdot\| \vec{f}_k(w,z) \|_2  \cdot |\d{z}| \nonumber
    \\&\leq \frac{1}{2\pi} \oint_{\Gamma} |f(z)| \cdot \|h_{w,z}\|_{S_0} \cdot \| \vec{f}_k(w,z) \|_2 \cdot |\d{z}|. \label[ineq]{eqn:a_posteriori_fp}
\end{align}

Note that \cref{eqn:a_posteriori_fp} can be viewed as an upper bound of the ultimate obtainable  accuracy of Lanczos-FA in finite precision after convergence.
 If the inequalities do not introduce too much slack, this upper bound will also produce a reasonable estimate.
If \( \| \vec{F}_k \| \) is small, the size of this addition is also hopefully small, in which case one may simply ignore the contribution of \cref{eqn:a_posteriori_fp}, provided the Lanczos-FA error is not near the final accuracy.
We have worked in the \( (\vec{A}-w\vec{I})^2 \) norm as it simplifies some of the analysis, but in principle, a similar approach could be used with other norms.
This is straightforward, but would involve bounding something other than \( \|h_{w,z}\|_{S_0} \).

\begin{example}
    The left panel of \cref{fig:sqrt_fp} shows the convergence of Lanczos-FA when \cref{alg:lanczos}, \emph{without reorthogonalization}, is used to generate \( \vec{Q}_k \) and \( \vec{T}_k \).
Compared with the error of the iterates generated using full orthogonalization, a delay of convergence and loss of accuracy are clear.
This figure also shows the error bounds derived by bounding \( \|\vec{F}_k\| \) as described above. 
We note that the contribution from the integral in \cref{eqn:a_posteriori_fp} is almost negligible until the bound is near the final accuracy.

\begin{figure}[ht]
    \begin{subfigure}[t]{.48\textwidth}\centering
        \includegraphics[width=\textwidth]{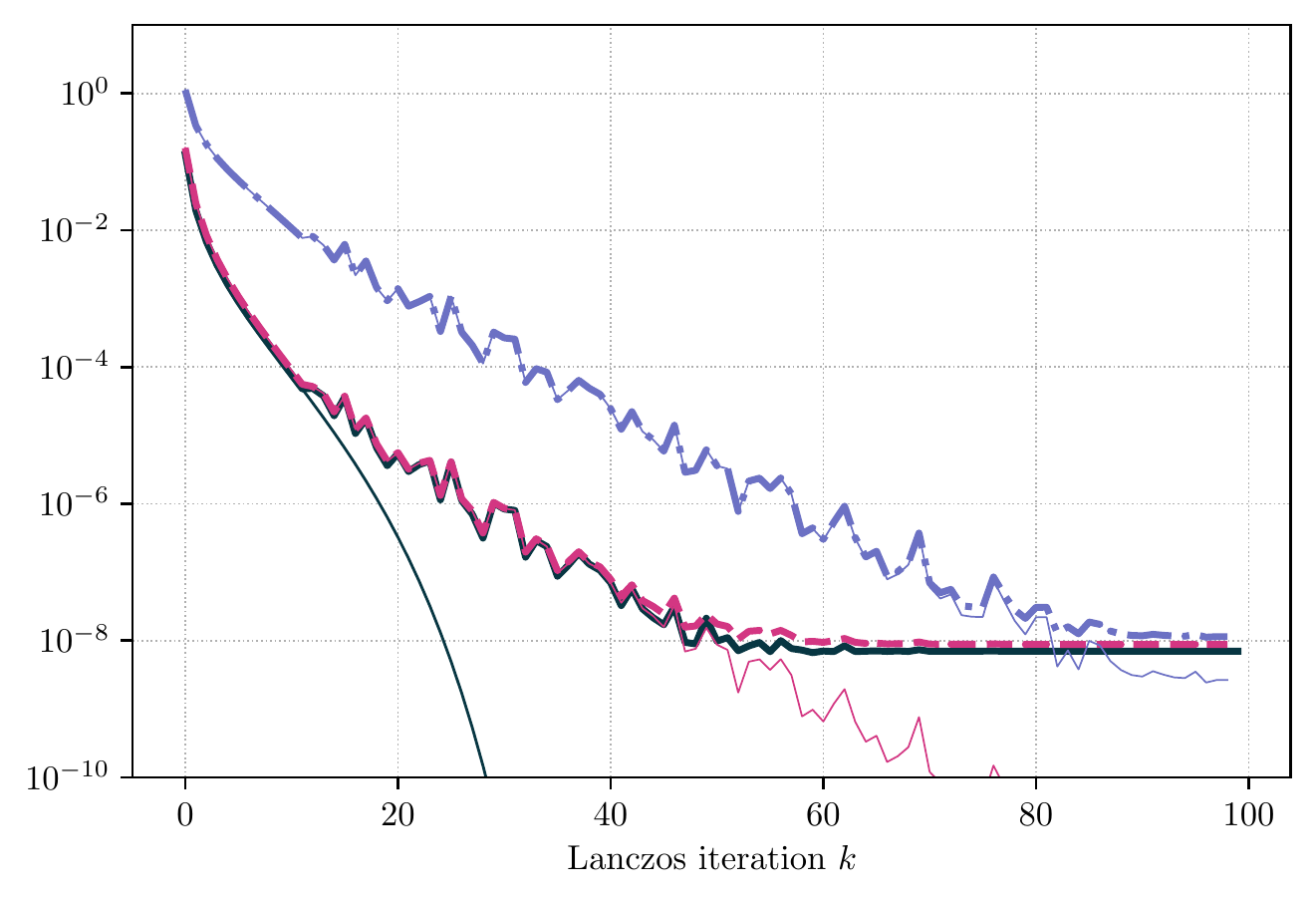}
        \caption{
        \( ( \vec{A} - w\vec{I})^2 \)-norm error bounds for \( f(x) = \sqrt{x} \) using a Pac-Man contour ($r\to0$, $R\to\infty$). \emph{Legend}: True Lanczos-FA error ({\protect\raisebox{0mm}{\protect\includegraphics[scale=.7]{imgs/legend/solid_blue.pdf}}}).
        A priori bounds obtained by using \cref{thm:err_int} with \( S_0 = S_i = \mathcal{I}(\vec{A}) \) with
        ({\protect\raisebox{0mm}{\protect\includegraphics[scale=.7]{imgs/legend/dashdot_purple.pdf}}})
        and without
        ({\protect\raisebox{0mm}{\protect\includegraphics[scale=.7]{imgs/legend/thin_purple.pdf}}})
        right hand side of \cref{eqn:a_posteriori_fp}.
        A posteriori bounds obtained by using \cref{thm:err_int} with \( S_0 = \mathcal{I}(\vec{A}) \) and \( S_i = \{ \lambda_i(\vec{T}_k) \} \) with ({\protect\raisebox{0mm}{\protect\includegraphics[scale=.7]{imgs/legend/dash_pink.pdf}}})
        and without
        ({\protect\raisebox{0mm}{\protect\includegraphics[scale=.7]{imgs/legend/thin_pink.pdf}}})
        right hand side of \cref{eqn:a_posteriori_fp}.
        For reference, the convergence of Lanczos-FA with reorthogonalization in double precision ({\protect\raisebox{0mm}{\protect\includegraphics[scale=.7]{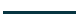}}}) is also shown.
        }
    \end{subfigure}\hfill 
    \begin{subfigure}[t]{.48\textwidth}\centering
        \includegraphics[width=\textwidth]{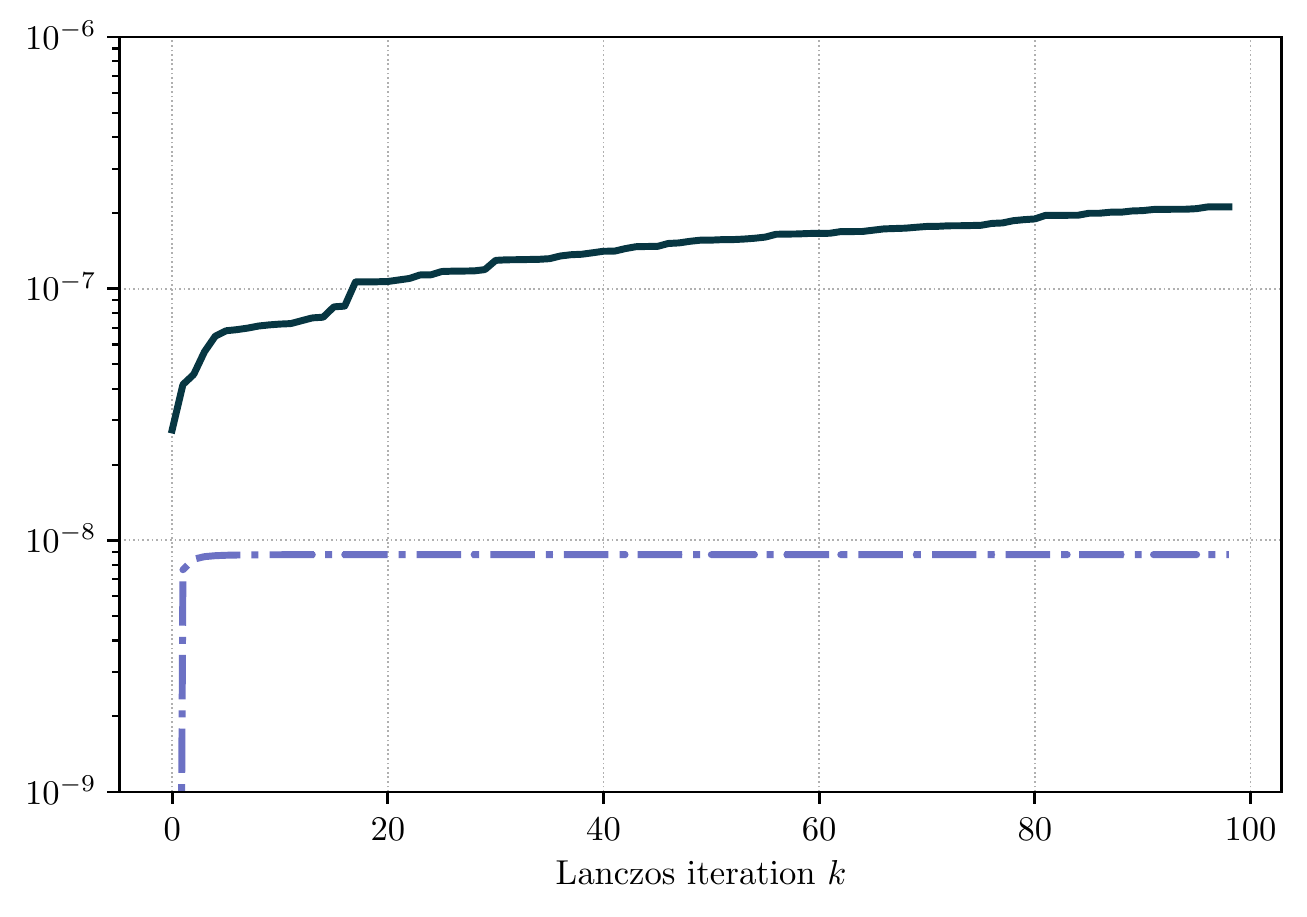}
        \caption{
        \emph{Legend}:
        \( \| \vec{F}_k \|_{\mathsf{F}}  \) ({\protect\raisebox{0mm}{\protect\includegraphics[scale=.7]{imgs/legend/solid_blue.pdf}}}), 
        right hand side of \cref{eqn:a_posteriori_fp} 
        ({\protect\raisebox{0mm}{\protect\includegraphics[scale=.7]{imgs/legend/dashdot_purple.pdf}}}).
        Note that the size of \( \vec{F}_k \) is small relative to the Lanczos-FA error, until the accuracy is near the final accuracy. 
        }
    \end{subfigure}
    \caption[]{
        \( \vec{A} \) has \( n=50 \) eigenvalues with \( \lambda_1 = 1 \), \( \lambda_n = 0.001 \), and \( \lambda_i = \lambda_n + \frac{n-i}{n-1} (\lambda_1 - \lambda_n ) \rho^{i-1} \), \(i=2, \ldots , n-1 \), as described in \cite{strakos_greenbaum_92} with parameter \( \rho = 0.8 \). 
        Here Lanczos is run without reorthogonalization in single precision arithmetic, but the integrals are evaluated using double precision arithmetic.
    }
    \label{fig:sqrt_fp}
\end{figure}
\end{example}

\section{Quadratic forms}
\label{sec:quadratic_form}

In many applications, one seeks to compute \( \vec{b}^\cT f(\vec{A}) \vec{b} \) rather than \( f(\vec{A})\vec{b} \).
A common approach is Lanczos quadrature, which computes the approximation \( \vec{b}^\cT \lan_k(f) \) to \( \vec{b}^\cT f(\vec{A}) \vec{b} \).
This approximation is a degree \( k \) Gaussian quadrature approximation to the integral of \( f \) against the weighted spectral measure corresponding to \( \vec{A}, \vec{b} \); see for instance \cite{chen_trogdon_ubaru_21,golub_meurant_09,ubaru_chen_saad_17}.
However, as with the case of Lanczos-FA, most existing error bounds for Lanczos quadrature are either pessimistic or limited to special classes of functions.

Note that the Lanczos-FA approximation satisfies,
\begin{align*}
    \vec{b}^\cT \lan_k(f) 
    = \vec{b}^\cT \vec{Q}_k f(\vec{T}_k) \vec{Q}_k^\cT \vec{b}
    = \| \vec{b} \|_2^2\: \vec{e}_1^\cT f(\vec{T}_k) \vec{e}_1.
\end{align*}
Thus, we can compute \( \vec{b}^\cT \lan_k(f) \) \emph{without} storing or recomputing \( \vec{Q}_k \).

Since \( \vec{A} \) is Hermitian, \( (\vec{A} - z\vec{I})^\cT = \vec{A} - \overline{z} \vec{I} \). 
Thus, since 
\begin{align*}
    \vec{b}^\cT (\vec{A} - z\vec{I})^{-1}
    = ( ( \vec{A} - \overline{z} \vec{I}   )^{-1} \vec{b} )^\cT
    = ( \lan_k(h_{\overline{z}}) + \err_k(\overline{z}) ) \vec{b} )^\cT
\end{align*}
we can expand the quadratic form error as
\begin{align*}
    \vec{b}^\cT \err_k(z)
    = \vec{b}^\cT ( \vec{A} - z \vec{I})^{-1} \Res_k(z)
    = \left( \lan_k ( h_{\overline{z}} ) ) + \err_k(\overline{z}) \right)^\cT \Res_k(z).
\end{align*}
Now, by definition, \( \lan_k ( h_{\overline{z}}(x) )  = \vec{Q}_k h_{\overline{z}}(\vec{T}_k) \vec{Q}^\cT \vec{b} \) and by \cref{thm:shifted_lanczos_equivalence} \( \Res_k(z) \) is proportional to \( \vec{q}_{k+1} \).
Thus, since, at least in exact arithmetic, \( \vec{q}_{k+1} \) is orthogonal to $\vec{Q}_k$,
\begin{align*}
    \vec{b}^\cT \err_k(z)
    &= \err_k(\overline{z})^\cT \Res_k(z)
    = (( \vec{A} - \overline{z}\vec{I})^{-1}  \Res_k(\overline{z}))^\cT \Res_k(z).
\end{align*}
Next, using \cref{thm:shifted_linear_system_error} and the fact that \( h_{w,z}(x) h_{w,\overline{z}}(x) = |h_{w,z}(x)|^2 \) for \( w,x\in\R \), 
\begin{align*}
    \vec{b}^\cT \err_k(z)
    &= |\det(h_{w,z}(\vec{T}_k))|^2 \Res_k(w)^\cT ( \vec{A} - z\vec{I})^{-1} \Res_k(w).
\end{align*}

We then have,
\begin{align*}
    | \vec{b}^\cT \err_k(z) |
    \leq | \! \det(h_{w,z}(\vec{T}_k)) |^2 \cdot \| ( \vec{A} - z\vec{I})^{-1} \|_2 \cdot \| \Res_k(w) \|_2^2.
\end{align*}
Applying the Cauchy integral formula we therefore obtain a bound for the quadratic form error analogous to \cref{thm:err_int},
\begin{align}
     | \vec{b}^\cT f(\vec{A}) \vec{b} - \vec{b}^\cT \lan_k(f) |
    & \leq  \left( \frac{1}{2\pi} \oint_{\Gamma} | f(z) |  \cdot \left( \prod_{i=1}^{k} \|h_{w,z}\|_{S_i}^2  \right) \!\cdot \| h_z \|_{S_0}  \!\cdot |\d{z}| \right) \| \Res_k(w) \|_2^2.  \label{eqn:err_quad_int}
\end{align}
Comparing the above to the bound of \cref{thm:err_int} for approximating $f(\vec{A})\vec{b}$, we see that $\|\err_k(w)\|$ is replaced with $\|\Res_k(w)\|_2^2$. 
Thus, heuristically, we can expect the quadratic form to converge at a rate twice that of the norm of the error of the matrix function.

Similar to \cref{thm:Q_wz_value} we have the following bound on $\|h_z\|_{S_i}$ when $S_0$ is an  interval.
This allows a bound on \cref{eqn:err_quad_int} analogous to \cref{eqn:integral_error}.
\begin{lemma}
\label{thm:Qz}
For any interval $[a,b]\subset \R$, if $z\in\mathbb{C}\setminus[a,b]$, we have    
    \begin{align*}
        \| h_z \|_{[a,b]}
    =
    \begin{cases}
        1/|\Im(z)| &  \Re(z) \in \mathcal{I}(\vec{A}) \\
        1/|a-z| & \Re(z) < a \\
        1/|b-z| & \Re(z) > b
    \end{cases}
\end{align*}   
\end{lemma}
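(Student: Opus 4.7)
The plan is to observe that $\|h_z\|_{[a,b]} = \sup_{x \in [a,b]} 1/|x-z|$, so computing this supremum reduces to finding the closest point to $z$ on the real interval $[a,b]$. That is,
\begin{align*}
\|h_z\|_{[a,b]} = \frac{1}{\displaystyle\min_{x\in[a,b]} |x-z|}.
\end{align*}
For real $x$ and $z\in\mathbb{C}$,
\begin{align*}
|x-z|^2 = (x-\Re(z))^2 + \Im(z)^2,
\end{align*}
which is a convex quadratic in $x$ minimized over $\R$ at $x=\Re(z)$.

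Restricting to $x\in[a,b]$, the minimizer is the projection of $\Re(z)$ onto $[a,b]$. I would just break into three cases. If $\Re(z)\in[a,b]$, the unconstrained minimizer lies in the interval, so the minimum of $|x-z|^2$ equals $\Im(z)^2$, giving $\|h_z\|_{[a,b]} = 1/|\Im(z)|$. If $\Re(z)<a$, then the quadratic is increasing on $[a,b]$, so the minimum is attained at $x=a$ with value $|a-z|$; similarly, if $\Re(z)>b$, the minimum is attained at $x=b$ with value $|b-z|$. Taking reciprocals yields the three cases of the lemma.

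There is no real obstacle — the proof is a one-variable minimization of a convex quadratic over an interval. The only thing to be careful about is noting that $z\in\mathbb{C}\setminus[a,b]$ guarantees the denominator is nonzero in each case (in particular, if $\Re(z)\in[a,b]$ then $\Im(z)\neq 0$), so $\|h_z\|_{[a,b]}$ is finite and the stated formulas are well-defined.
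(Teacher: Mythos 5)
Your proof is correct. The paper states \cref{thm:Qz} without proof, so there is nothing to compare against; your argument---rewriting $\|h_z\|_{[a,b]}$ as the reciprocal of the distance from $z$ to $[a,b]$ and computing that distance by projecting $\Re(z)$ onto the interval---is the natural one, and it is consistent in spirit with how the paper proves the analogous \cref{thm:Q_wz_value} (by locating the extrema of $|h_{w,z}(x)|^2$ on $[a,b]$). Your closing remark that $z\notin[a,b]$ forces $\Im(z)\neq 0$ in the first case is the right thing to check; note also that the condition ``$\Re(z)\in\mathcal{I}(\vec{A})$'' in the lemma statement should read ``$\Re(z)\in[a,b]$,'' which is how you correctly interpreted it.
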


\begin{figure}[ht]
        \begin{subfigure}[t]{.48\textwidth}\centering
        \includegraphics[width=\textwidth]{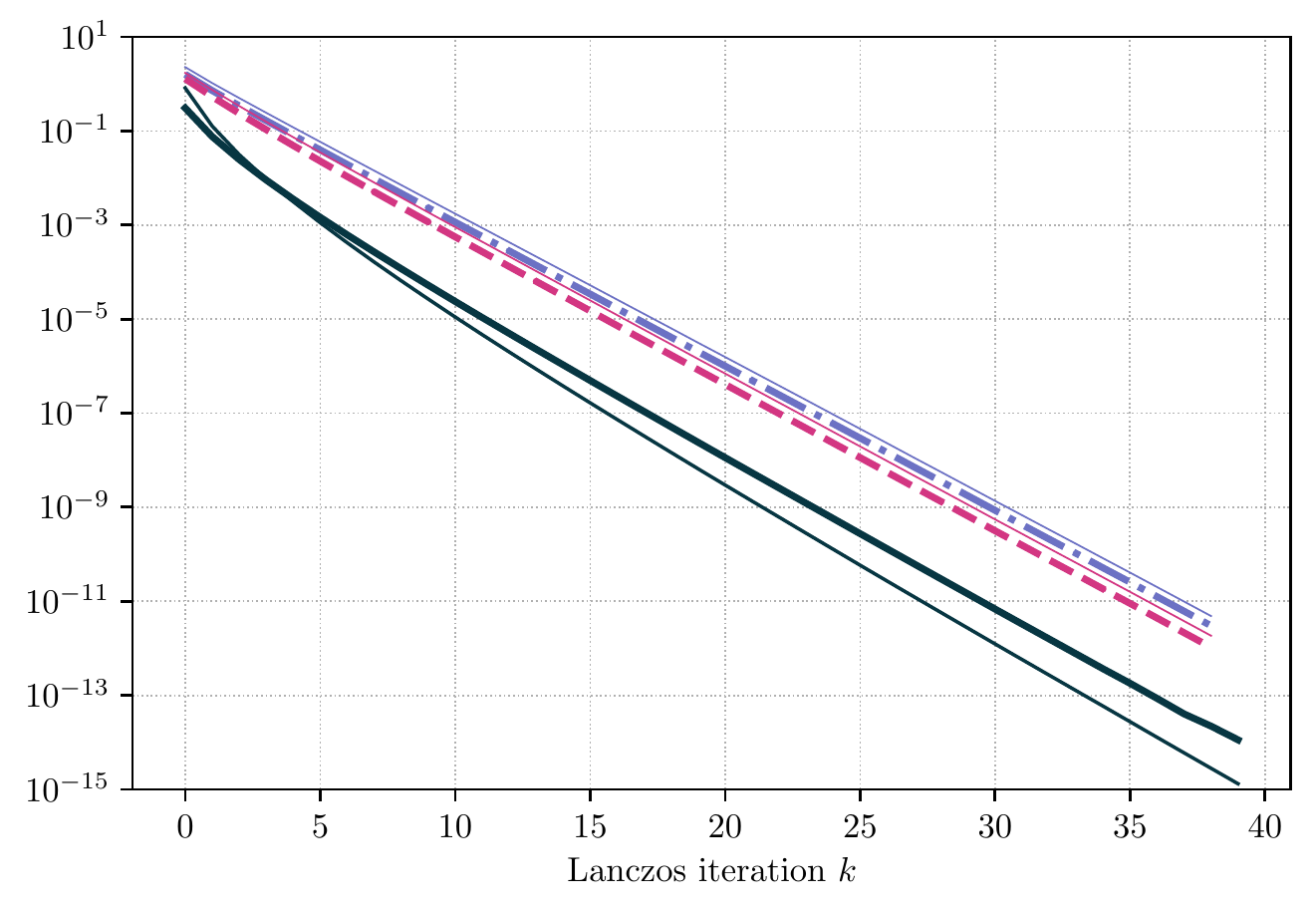}
        \caption{Error bounds for \( f(x) = \log(x) \) using a Pac-Man contour as described in \cref{ex:quadform_log}. \( \vec{A} = \vec{X}\vec{X}^\T \), where the entries of \( \vec{X} \in\mathbb{R}^{n,2n} \) are independent Gaussians with mean zero variance \( 1/2n \) where \( n = 3000 \).
        \emph{Legend}: 
        A priori bounds obtained by using \cref{eqn:err_quad_int} with \( S_0 = S_i = \mathcal{I}(\vec{A})  \) ({\protect\raisebox{0mm}{\protect\includegraphics[scale=.7]{imgs/legend/dashdot_purple.pdf}}}) and 
        \( S_0 = S_i = \tilde{\mathcal{I}}(\vec{A}) \)
        ({\protect\raisebox{0mm}{\protect\includegraphics[scale=.7]{imgs/legend/thin_purple.pdf}}}).
        A posteriori bounds obtained by using \cref{eqn:err_quad_int} with \( S_0 = \mathcal{I}(\vec{A}) \), \( S_i = \{ \lambda_i(\vec{T}_k) \} \) ({\protect\raisebox{0mm}{\protect\includegraphics[scale=.7]{imgs/legend/dash_pink.pdf}}}) and 
        \( S_0 = \tilde{\mathcal{I}}(\vec{A}) \), \( S_i = \{ \lambda_i(\vec{T}_k) \} \)
        ({\protect\raisebox{0mm}{\protect\includegraphics[scale=.7]{imgs/legend/thin_pink.pdf}}}).
    }
    \end{subfigure}\hfill
    \begin{subfigure}[t]{.48\textwidth}\centering
        \includegraphics[width=\textwidth]{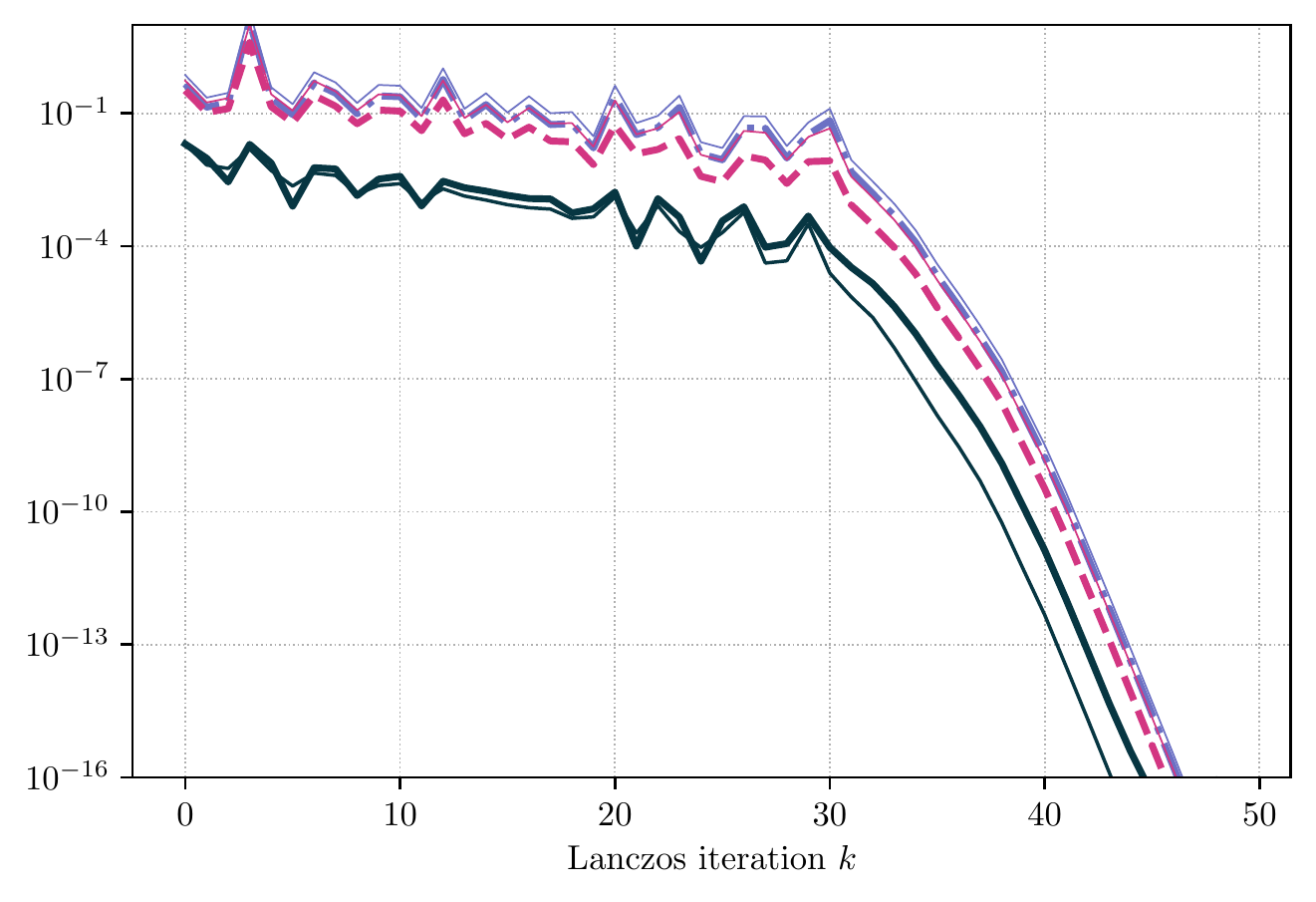}
        \caption{Error bounds for \( f(x) = \operatorname{step}(x-a) \) with double circle contour as described in \cref{ex:quadform_step}. \( \vec{A} \) is the covariance matrix of the MNIST training data \cite{lecun_cortes_burges_10}.
        \emph{Legend}:
        A priori bounds obtained by using \cref{eqn:err_quad_int} with \( S_0 = S_i = \mathcal{I}_w(\vec{A})  \) ({\protect\raisebox{0mm}{\protect\includegraphics[scale=.7]{imgs/legend/dashdot_purple.pdf}}}) and 
        \( S_0 = S_i = \tilde{\mathcal{I}}_w(\vec{A}) \)
        ({\protect\raisebox{0mm}{\protect\includegraphics[scale=.7]{imgs/legend/thin_purple.pdf}}}).
        A posteriori bounds obtained by using \cref{eqn:err_quad_int} with \( S_0 = \mathcal{I}_w(\vec{A}) \), \( S_i = \{ \lambda_i(\vec{T}_k) \} \) ({\protect\raisebox{0mm}{\protect\includegraphics[scale=.7]{imgs/legend/dash_pink.pdf}}}) and 
        \( S_0 = \tilde{\mathcal{I}}_w(\vec{A}) \), \( S_i = \{ \lambda_i(\vec{T}_k) \} \)
        ({\protect\raisebox{0mm}{\protect\includegraphics[scale=.7]{imgs/legend/thin_pink.pdf}}}).
        }
    \end{subfigure} 
    \caption{
        Lanczos-FA quadratic form errors.
        \emph{Legend}:
        \( | \vec{b}^\cT f(\vec{A}) \vec{b} - \vec{b}^\cT \lan_k(f) | \) ({\protect\raisebox{0mm}{\protect\includegraphics[scale=.7]{imgs/legend/solid_blue.pdf}}}).
        For reference we also show \( \| f(\vec{A})\vec{b} - \lan_k(f) \|_2^2 \)
        ({\protect\raisebox{0mm}{\protect\includegraphics[scale=.7]{imgs/legend/thin_blue.pdf}}}). 
        Note that this is the square of the 2-norm of the Lanczos-FA error.
    }
    \label{fig:quad_form}
\end{figure}

In the case that the contour $\Gamma$ does not pass through \( \mathcal{I}(\vec{A}) \), the bound of \cref{eqn:err_quad_int} is essentially  as easy to compute as that of  \cref{thm:err_int}.
However, if the contour passes through \( \mathcal{I}(\vec{A}) \) at \( w \), to ensure that \( S_0 \) does not contain points in the contour, it must be chosen as a set other than $\mathcal{I}(\vec A)$. This set must contain all of $\vec A$'s eigenvalues and we must bound its distance to the contour (in particular, to $w$).

\begin{example}
\label{ex:quadform_log}
Suppose \( \vec{A} \) is positive definite and \( f(x) = \log(x) \).
We use \cref{eqn:err_quad_int} to obtain a bound for the quadratic form error $| \vec{b}^\cT f(\vec{A})\vec{b} - \vec{b}^\cT \lan_k(f)|$.
A priori bounds are obtained with $S_0, S_i = \mathcal{I}(\vec A)$ while a posteriori bounds are obtained with $S_0 = \mathcal{I}(\vec A)$ and $S_i = \{ \lambda_i(\vec{T}_k) \}$.
In both cases, we take $\Gamma$ as the Pac-Man contour centered at 0 with $r=\lambda_{\text{min}}(\vec{A})/100$ to avoid the singularity $\log(0) = -\infty$.
The resulting bounds are shown in the left panel of \cref{fig:quad_form}.

As in \cref{ex:sqrt_contours}, we also consider the cases where we use an estimate $\tilde{\mathcal{I}}(\vec{A})$ for $\mathcal{I}(\vec{A})$ to study the sensitivity of our bounds to $S_i$.
For these tests we use a Pac-Man contour with $r=\lambda_{\text{min}}(\vec{A})/200$.
\end{example}

\begin{example}
\label{ex:quadform_step}
Let \( f(x) = \operatorname{step}(x-a) \) for  \( a \in \mathcal{I}(\vec{A}) \), and set $w = a$.
Similarly to the previous example we use \cref{eqn:err_quad_int} to obtain a bound for the quadratic form error $| \vec{b}^\cT f(\vec{A})\vec{b} - \vec{b}^\cT \lan_k(f)|$. 
However, we must have $S_i$ avoid where $\Gamma$ crosses the real axis.

Suppose \( \lambda_{\text{max}}^{\text{l},w}(\vec{A}) \) and \( \lambda_{\text{min}}^{\text{r},w}(\vec{A}) \) are consecutive eigenvalues of \( \vec{A} \) so that  \( \lambda_{\text{max}}^{\text{l},w}(\vec{A}) < w < \lambda_{\text{min}}^{\text{r},w}(\vec{A}) \).
Then we can define
\begin{align*}
\mathcal{I}_w(\vec{A}) := [ \lambda_{\text{min}}(\vec{A}) , \lambda_{\text{max}}^{\text{l},w}(\vec{A}) ] \cup [ \lambda_{\text{min}}^{\text{r},w}(\vec{A}) , \lambda_{\text{max}}(\vec{A}) ].
\end{align*}
In this case, \( \|h_{z}\|_{\mathcal{I}_w(\vec{A})} = \max\{ \|h_{z}\|_{ [\lambda_{\text{min}},\lambda_{\text{max}}^{\text{l},w} ] }, \|h_{z}\|_{ [\lambda_{\text{min}}^{\text{r},w},\lambda_{\text{max}} ] }) \} \) can be computed using \cref{thm:Qz}. 

We can then apply \cref{eqn:err_quad_int} to obtain a bound for the quadratic form error $| \vec{b}^\cT f(\vec{A})\vec{b} - \vec{b}^\cT \lan_k(f)|$.
A priori bounds are obtained with $S_0, S_i = \mathcal{I}_w(\vec A)$ while a posteriori bounds are obtained with $S_0 = \mathcal{I}_w(\vec A)$ and $S_i = \{ \lambda_i(\vec{T}_k) \}$.
This is shown in the right panel of \cref{fig:quad_form}.
Of course, in practice it is unlikely that \( \lambda_{\text{min}}^{\text{l},w}(\vec{A}) \) and \( \lambda_{\text{max}}^{\text{r},w}(\vec{A}) \) are known.
The distance to \( w \) of course can be estimated by estimating the smallest eigenvalue of \( (\vec{A} - w\vec{I})^2 \), perhaps via Lanczos.
However, it can be expected to be more difficult than estimating \( \lambda_{\text{min}}(\vec{A}) \) and \( \lambda_{\text{max}}(\vec{A}) \).
Thus, we also show the effect of approximating \( \lambda_{\text{max}}^{\text{l},w}(\vec{A}) \) and \( \lambda_{\text{min}}^{\text{l},w}(\vec{A}) \).
Specifically, we compute \( \| h_{w,z} \|_{\tilde{\mathcal{I}}_w(\vec{A})} \) where 
\begin{align*}
    \tilde{\mathcal{I}}_w(\vec{A}) = [\lambda_{\text{min}}/2,w-\gamma]\cup[w+\gamma,1.5\lambda_{\max}]) 
\end{align*}
for \( \gamma = \min_{\lambda\in\Lambda(\vec{A})} | \lambda - w | / 100 \).
\end{example}

\section{Conclusion and outlook}

In this paper we give a simple approach to generate error bounds for Lanczos-FA used to approximate \( f(\vec{A})\vec{b} \) when \( f(x) \) is piecewise analytic.
Our framework can be used both a priori and a posteriori, and the bounds, to close degree, hold in finite precision.
While outside the scope of this paper, the same general approach is applicable to non-Hermitian matrices computed using an Arnoldi factorization.

\section{Acknowledgments}
The authors thank Thomas Trogdon for suggestions in early stages.

\appendix

\section{Error bounds for Lanczos on linear systems}
\label{sec:linear_systems}
Our analysis reduces understanding the Lanczos-FA error for a function $f$ to understanding \( \| \err_k(w) \| \), the error of Lanczos-FA used to solve the system \( (\vec{A} - w\vec{I}) \vec{x} = \vec{b} \).
We review several bounds for this task. Without loss of generality, we assume $w=0$, as the $w\vec{I}$ term can be incorperated directly into \( \vec{A}\).

In the case that \( \vec{A}\) is positive (or negative) definite, Lanczos-FA with \( f(x) = 1/x  \) is equivalent to the conjugate gradient algorithm (CG) \cite{hestenes_stiefel_52}.
Therefore, it inherits CG's well known property of returning an optimal solution in the $\vec{A}$-norm (or $-\vec{A}$-norm if $\vec{A}$ is negative definite).
That is,
\begin{align*}
    \| \err_k \|_{\vec{A}}
    = \min_{\vec{y} \in \mathcal{K}_k(\vec{A},\vec{b}) } \| \vec{A}^{-1} \vec{b} - \vec{y} \|_{\vec{A}}
    = \min_{\substack{\deg p \leq k\\p(0) =1}} \| p(\vec{A}) \vec{A}^{-1} \vec{b} \|_{\vec{A}}.
\end{align*}

From this optimality, we obtain the following (well known) bounds for positive definite \( \vec{A} \)
\begin{align*}
    \frac{ \| \err_k \|_{\vec{A}} }{ \| \err_0 \|_{\vec{A}} }
    \leq \min_{\substack{\deg p \leq k\\p(0) =1}} \max_{\lambda\in\Lambda(\vec{A})} | p(\lambda) |
    \leq 2 \left( \frac{\sqrt{\kappa(\vec{A})} -1 }{\sqrt{\kappa(\vec{A})}+1} \right)^k
    \leq 2 \exp\left( - \frac{2k}{ \sqrt{\kappa(\vec{A})} } \right)
\end{align*}
where the final bound follows from the fact that $(x-1)/(x+1) \leq \exp(-2/x)$ for all $x\geq 1$.
The minimax bound, based on the eigenvalues of \( \vec{A} \), is tight in the sense that for each \( k \) there exists \( \vec{b} \) (dependent on \( \vec{A} \) and \( k \)) so that \( \lan_k(f,\vec{A},\vec{b}) \) attains the bound \cite{greenbaum_79}.
The final inequality implies that Lanczos-FA requires \( k \leq \frac{1}{2} \sqrt{\kappa(\vec{A})} \log(2/\epsilon) \) iterations to ensure \( \| \err_k \|_{\vec{A}} / \| \err_0 \|_{\vec{A}} \leq \epsilon \).

From the result above, it is also straightforward to derive a bound that is more directly comparable to \cref{eqn:triangle_ineq1} and \cref{eqn:poly_unif}. Specifically, for $f(x) = 1/x$, \cite{musco_musco_sidford_18} shows:

\begin{align*}
\|\err_k\|_2 = \|f(\vec{A}) \vec{b} - \lan_k(f) \|_2 &\leq \sqrt{\kappa(\vec{A})} \| \vec{b} \|_2\cdot \min_{\deg p< k} \| f-p \|_{\Lambda(\vec{A})}.   \label[ineq]{eqn:a_only}
\end{align*}
Beside the leading constant $\sqrt{\kappa(\vec{A})}$, this bound is strictly stronger than   \cref{eqn:triangle_ineq1} because it only depends on the eigenvalues of $\vec{A}$, and not those of $\vec{T}_k$. As a result, it is also strictly stronger than the uniform approximation bound of \cref{eqn:poly_unif}. 

If \( \vec{A} \) is \emph{indefinite}, we can obtain error bounds by relating the Lanczos-FA approximation to MINRES.
For these bounds, we need the following theorem from \cite{cullum_greenbaum_96} which compares the 2-norm of the residual in the Lanczos approximation to the solution of a Hermitian linear system to that of the MINRES algorithm.
MINRES, by definition, minimizes the 2-norm of the residual over all approximations from the Krylov subspace.

\begin{theorem}
\label{thm:minres_CG_residuals1}
Let \( \vec{A} \) be a nonsingular Hermitian matrix and define \( \vec{r}_k^M \) as the MINRES residual at step $k$; i.e. 
\begin{align*}
    \vec{r}_k^M := \vec{b} - \vec{A} \hat{\vec{y}}
    ,&&
    \hat{\vec{y}} = \argmin_{\vec{y}\in\mathcal{K}_k(\vec{A},\vec{b})} \| \vec{b} - \vec{A} \vec{y} \|_2 .
\end{align*}
Then, assuming that the initial residuals in the two procedures are the same,
\begin{align*}
    \frac{\| \Res_k  \|_2}{\|\Res_0\|_2} 
    = \frac{\| \vec{r}_{k}^M \|_2/\| \vec{r}_{0}^M\|_{2}}{\sqrt{1- \left( \| \vec{r}_{k}^M \|_2 / \| \vec{r}_{k-1}^M \|_2 \right)^2}}.
\end{align*}
\end{theorem}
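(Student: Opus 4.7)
The plan is to express both the Lanczos and MINRES residuals in the basis $\vec{Q}_{k+1}$ and to compare them via the QR factorization used internally by MINRES. Letting $\bar{\vec{T}}_k$ denote the $(k+1) \times k$ matrix obtained by appending the row $\beta_k \vec{e}_k^{\T}$ to $\vec{T}_k$, we have $\vec{A} \vec{Q}_k = \vec{Q}_{k+1} \bar{\vec{T}}_k$ from \cref{eqn:lanczos_factorization}. Setting $z = 0$ in \cref{thm:shifted_lanczos_equivalence} gives $\Res_k = \rho_k \vec{q}_{k+1}$ for some scalar $\rho_k$ with $|\rho_k| = \|\Res_k\|_2$. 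Since both the Lanczos iterate $\vec{Q}_k \vec{T}_k^{-1}\|\vec{b}\|_2 \vec{e}_1$ and the MINRES iterate $\hat{\vec{y}}$ lie in $\mathcal{K}_k(\vec{A},\vec{b})$, we can write $\vec{r}_k^M = \Res_k - \vec{A}\vec{Q}_k \vec{\Delta} = \vec{Q}_{k+1}(\rho_k \vec{e}_{k+1} - \bar{\vec{T}}_k \vec{\Delta})$ for some $\vec{\Delta} \in \R^k$, and the MINRES optimality condition forces $\vec{\Delta}$ to minimize $\|\rho_k \vec{e}_{k+1} - \bar{\vec{T}}_k \vec{\Delta}\|_2$; in particular, $\|\vec{r}_k^M\|_2$ equals this minimum.

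The next step is to evaluate the minimum using the QR factorization $\bar{\vec{T}}_k = \vec{G}^{\T} \begin{bmatrix} \vec{R}_k \\ \vec{0}^{\T} \end{bmatrix}$ produced by the product of Givens rotations $\vec{G} = \vec{G}_k \vec{G}_{k-1} \cdots \vec{G}_1$ with cosines $c_j$ and sines $s_j$, where $\vec{R}_k$ is $k \times k$ upper triangular. Since $\vec{G}$ is orthogonal, the minimum equals the magnitude of the last entry of $\vec{G}(\rho_k \vec{e}_{k+1})$, because the first $k$ rows can be matched exactly by a back-solve against $\vec{R}_k$. A direct bookkeeping computation shows that $\vec{G}_1, \ldots, \vec{G}_{k-1}$ act only on rows with index at most $k$ and hence fix $\vec{e}_{k+1}$, while $\vec{G}_k$ maps $\vec{e}_{k+1}$ to $s_k \vec{e}_k + c_k \vec{e}_{k+1}$. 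This gives $(\vec{G}\vec{e}_{k+1})_{k+1} = c_k$, and therefore $\|\vec{r}_k^M\|_2 = |c_k| \cdot \|\Res_k\|_2$.

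The final step uses a parallel bookkeeping on $\vec{G}\vec{e}_1$ to recover the classical MINRES residual recurrence $\|\vec{r}_k^M\|_2 = |s_k| \cdot \|\vec{r}_{k-1}^M\|_2$: iterating the actions of $\vec{G}_1, \ldots, \vec{G}_k$ on $\vec{e}_1$ shows the $(k+1)$-st entry of $\vec{G}\vec{e}_1$ equals $\pm s_k s_{k-1} \cdots s_1$, and $\|\vec{r}_k^M\|_2 = \|\vec{b}\|_2 \cdot |(\vec{G})_{k+1,1}|$. Combining this with $c_k^2 + s_k^2 = 1$ gives $c_k^2 = 1 - (\|\vec{r}_k^M\|_2/\|\vec{r}_{k-1}^M\|_2)^2$, so
\[ \|\Res_k\|_2 = \frac{\|\vec{r}_k^M\|_2}{|c_k|} = \frac{\|\vec{r}_k^M\|_2}{\sqrt{1 - \bigl(\|\vec{r}_k^M\|_2/\|\vec{r}_{k-1}^M\|_2\bigr)^2}}. \]
Dividing through by $\|\Res_0\|_2 = \|\vec{b}\|_2 = \|\vec{r}_0^M\|_2$ yields the claim. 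The main obstacle will be the elementary but careful tracking of the Givens rotations to identify the entries $(\vec{G})_{k+1,1}$ and $(\vec{G})_{k+1,k+1}$; once those are pinned down, the remainder of the argument is a one-line algebraic manipulation, and the proof automatically accommodates the indefinite case since nothing in the derivation relies on a sign for $\vec{A}$.
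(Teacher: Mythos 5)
The paper itself does not prove this theorem: it is imported verbatim from Cullum and Greenbaum \cite{cullum_greenbaum_96}, so there is no internal proof to compare against. Your argument is correct and is essentially the classical derivation used in that reference: express both residuals in the orthonormal basis $\vec{Q}_{k+1}$, reduce MINRES to the small least-squares problem with the $(k+1)\times k$ matrix $\bar{\vec{T}}_k$, and read off $\|\vec{r}_k^M\|_2 = |c_k|\,\|\Res_k\|_2$ and $\|\vec{r}_k^M\|_2 = |s_k|\,\|\vec{r}_{k-1}^M\|_2$ from the incremental Givens QR factorization. Your reformulation of the MINRES least-squares problem relative to the Galerkin residual (shifting by $\vec{\Delta}$ so that the right-hand side becomes $\rho_k\vec{e}_{k+1}$ rather than $\|\vec{b}\|_2\vec{e}_1$) is a slightly nonstandard but clean way to get the first identity. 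Two minor caveats: the back-solve step needs $\vec{R}_k$ nonsingular (which follows from $\vec{A}$ nonsingular) and the whole statement needs $\vec{T}_k$ nonsingular, equivalently $c_k\neq 0$, for $\Res_k$ to exist --- this is implicit in the theorem as stated; and since $\vec{A}$ and $\vec{b}$ may be complex, the rotations are unitary rather than real, so the relevant identity is $|c_k|^2+|s_k|^2=1$, which changes nothing in the conclusion.
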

Therefore, if MINRES makes good progress at step \( k \) (i.e. \( \| \vec{r}_k^M \|_2 / \| \vec{r}_{k-1}^M \|_2 \) is small), then \cref{thm:minres_CG_residuals1} implies \( \| \Res_k \|_2/\|\Res_0\|_2 \approx \| \vec{r}_k^M \|_2/\|\vec{r}_0^M\|_2 \).
Thus, since MINRES converges at a linear rate,
there will be iterations in which Lanczos-FA has nearly as good a residual norm as MINRES.
This is made precise by the following result.

\begin{corollary}
\label{thm:indefinite_CG}
Suppose $\Lambda(\vec{A})\subset [a,b]\cup[c,d]$, where $a < b < 0 < c < d$ with $b-a = d-c$, and define $\gamma = \sqrt{|ad|/|bc|}$.
Then, for any $\epsilon < \gamma/4$ there exsits \( k \leq 2 \gamma \log(\sqrt{2} \gamma/\epsilon) \) so that \( \| \Res_k \|_2 / \| \Res_0 \|_2 < \epsilon \).
\end{corollary}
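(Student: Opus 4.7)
The plan is to combine the exact Lanczos--MINRES identity in \cref{thm:minres_CG_residuals1} with the classical two-interval MINRES convergence bound by way of a telescoping/pigeonhole argument. Writing $a_k := \|\vec{r}_k^M\|_2/\|\vec{r}_0^M\|_2$ (so $a_0 = 1$ and $\{a_k\}$ is non-increasing because MINRES is monotone in residual norm), squaring the reciprocal of the identity in \cref{thm:minres_CG_residuals1} yields the clean relation
\begin{align*}
    \frac{\|\Res_0\|_2^2}{\|\Res_k\|_2^2}
    \;=\; \frac{1}{a_k^2} - \frac{1}{a_{k-1}^2}.
\end{align*}
So the problem reduces to finding a $k$ where this telescoping increment is large.

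Telescoping and averaging then do most of the work. Since $\sum_{k=1}^{K}(1/a_k^2 - 1/a_{k-1}^2) = 1/a_K^2 - 1$, by the pigeonhole principle there exists $k^{*} \in \{1,\ldots,K\}$ with $1/a_{k^{*}}^2 - 1/a_{k^{*}-1}^2 \geq (1/a_K^2 - 1)/K$, and hence
\begin{align*}
    \frac{\|\Res_{k^{*}}\|_2}{\|\Res_0\|_2} \;\leq\; \sqrt{\frac{K}{1/a_K^2 - 1}}.
\end{align*}
All that remains is to bound $a_K$ from above and choose $K$.

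For the MINRES bound, I would invoke the standard estimate for a spectrum contained in two equal-length intervals, proved via a shifted Chebyshev polynomial in the quadratic change of variable that maps $[a,b]\cup[c,d]$ onto $[-1,1]$: for even $K$,
\begin{align*}
    a_K \leq 2\left(\frac{\gamma-1}{\gamma+1}\right)^{K/2} \leq 2\,e^{-K/\gamma},
\end{align*}
using $(\gamma-1)/(\gamma+1) \leq e^{-2/\gamma}$. This gives $1/a_K^2 \geq e^{2K/\gamma}/4$. Taking $K = 2\gamma\log(\sqrt{2}\gamma/\epsilon)$ then produces $e^{2K/\gamma} = 4\gamma^4/\epsilon^4$ and hence $1/a_K^2 \geq \gamma^4/\epsilon^4$. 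Under $\epsilon < \gamma/4$, this quantity exceeds $256$, so the additive $-1$ in $1/a_K^2 - 1$ is absorbed up to a constant; substituting back into $\sqrt{K/(1/a_K^2-1)}$ and estimating the $\sqrt{K}$ factor then delivers $\|\Res_{k^*}\|_2/\|\Res_0\|_2 < \epsilon$.

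The main obstacle I expect is Step~4, the final bookkeeping: the stated iteration count $2\gamma\log(\sqrt{2}\gamma/\epsilon)$ is calibrated tightly enough that the leftover $\sqrt{K}$ carries a $\sqrt{\log(\gamma/\epsilon)}$ factor that must be compared carefully against the polynomial-in-$\gamma/\epsilon$ gain from $1/a_K^2$, and the threshold $\epsilon < \gamma/4$ has to absorb both this logarithmic overhead and the additive $-1$. The overall skeleton (identity $+$ telescoping $+$ MINRES bound) is routine; getting the constants to line up cleanly with the stated bound is the delicate part, and minor slack in $(\gamma-1)/(\gamma+1) \leq e^{-2/\gamma}$ may need to be tightened (e.g.\ by working with $e^{-2/(\gamma+1)}$) or a slightly larger leading constant absorbed into the logarithm without changing the $O(\gamma\log(\gamma/\epsilon))$ rate.
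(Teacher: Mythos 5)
Your route is correct in its key mechanism and is genuinely different from the paper's. The identity $\|\Res_0\|_2^2/\|\Res_k\|_2^2 = 1/a_k^2 - 1/a_{k-1}^2$ is a valid rearrangement of \cref{thm:minres_CG_residuals1}, and telescoping plus pigeonhole cleanly reduces the problem to showing $a_K < \epsilon/\sqrt{K+\epsilon^2}$, i.e.\ to beating an amplification factor of $\sqrt{K}$. The paper instead pigeonholes on the \emph{multiplicative} one-step MINRES reduction ratio over a window $(k',k'']$ on which the residual provably drops by a factor $\tau = 2\epsilon/\gamma$, which yields via \cref{thm:minres_CG_residuals1} an amplification factor $1/\sqrt{1-\tau^{2/\ell}} \le 2\gamma$ that is independent of the iteration count (at the cost of a two-case analysis and the auxiliary \cref{thm:sqrtxy_bound}). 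Your additive telescoping is arguably cleaner and avoids that lemma entirely; the trade-off is that your amplification factor $\sqrt{K} \sim \sqrt{\gamma\log(\gamma/\epsilon)}$ carries the logarithm you flag, whereas the paper's is a pure constant in $k$. The bookkeeping you leave open does close: ignoring integrality, $K = 2\gamma\log(\sqrt{2}\gamma/\epsilon)$ gives $a_K \le \epsilon^2/(2\gamma^2)\cdot 2 = \epsilon^2/\gamma^2$, and the required inequality $\epsilon^2\sqrt{K}/\gamma^2 < \epsilon$ reduces to $2\log(\sqrt{2}t) < \gamma t^2$ with $t = \gamma/\epsilon > 4$, which holds with room to spare. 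One concrete warning for the write-up: the losses from rounding $K$ down to an even integer and from the $\lfloor K/2\rfloor$ exponent in the MINRES bound cost a factor up to $e^{2/\gamma}$ in $a_K$, and if you also relax $(\gamma-1)/(\gamma+1)$ to $e^{-2/\gamma}$ this combination genuinely breaks the inequality when $\gamma$ is near $1$ and $\epsilon$ is near $\gamma/4$ (e.g.\ $\gamma\approx 1$, $\epsilon\approx 1/4$ gives $K=2$ or $3$ and a bound of roughly $0.4 > \epsilon$); in that regime you must keep the sharp form $2\left((\gamma-1)/(\gamma+1)\right)^{\lfloor K/2\rfloor}$, which tends to $0$ as $\gamma\to 1^+$ and restores the claim.
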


\begin{proof}
If the eigenvalues of $\vec{A}$ lie in $[a,b]\cup[c,d]$, where $a < b < 0 < c < d$ and $b-a = d-c$, then as in \cite[Section 3.1]{greenbaum_97}, the optimality of MINRES implies
\begin{align*}
    \frac{\|\vec{r}_{j}^M\|_2}{\|\vec{r}_0^M\|_2}
    \leq 2 \left( \frac{\sqrt{|ad|/|bc|}-1}{\sqrt{|ad|/|bc|}+1} \right)^{\lfloor j/2 \rfloor}
    \leq 2 \exp \left( - \frac{2 \lfloor j/2 \rfloor}{\sqrt{|ad|/|bc|}} \right).
\end{align*}

    For notational convenience set $\tau = 2\epsilon/\gamma$ and define $k'$ to be the first iteration where $\|r_{k'}^M\|_2/\|r_0\|_2 < \tau/2$ and $k''$ to be the first iteration where $\|r_{k''}^M\|_2/\|r_0\|_2 < \tau^{2}/4$.
Note that $k'' \leq \gamma \log(2/(\tau^2/4)) = 2 \gamma \log(2\sqrt{2}/\tau)$.

    First, suppose $\|r_{k'}\|_2/\|r_0\|_2 \leq \tau/4$. 
    Then, since $\|r_{k'-1}\|_2/\|r_0\|_2 > \tau/2$, using \cref{thm:minres_CG_residuals1},
\begin{align*}
    \frac{\| \Res_k  \|_2}{\|\Res_0\|_2} 
    = \frac{\| \vec{r}_{k'}^M \|_2/\| \vec{r}_{0}^M\|_2}{\sqrt{1- \left( \| \vec{r}_{k'}^M \|_2 / \| \vec{r}_{k'-1}^M \|_2 \right)^2}}
    \leq \frac{\tau/4}{\sqrt{1-((\tau/4)/(\tau/2))^2}}
    = \frac{\tau}{2\sqrt{3}}
    \leq \epsilon.
\end{align*}

    Next, suppose that $\|r_{k'}\|_2/\|r_0\|_2 > \tau/4$.
Let $\ell = k''-k$ and note that there must exist an iteration $k\in(k',k'']$ so that
\begin{align*}
    \frac{\| \vec{r}_k^M \|_2}{\| \vec{r}_{k-1}^M \|_2} 
    = \frac{\| \vec{r}_k^M \|_2/\| \vec{r}_0^M \|_2}{\| \vec{r}_{k-1}^M \|_2/\| \vec{r}_0^M \|_2} 
    \leq \left( \frac{\tau^2/4}{\tau/4} \right)^{1/\ell}.
\end{align*}
Now note that 
\begin{align*}
    \frac{1}{\sqrt{1- \left( \left(\frac{\tau^2/4}{\tau/4}\right)^{1/\ell} \right)^2}}
    = \frac{1}{\sqrt{1- \tau^{2/\ell}}}
\end{align*}
and that $\ell \leq k'' - 1 \leq 2 \gamma \log( 2\sqrt{2}/\tau)$ so
\begin{align*}
    \frac{1}{\sqrt{1- \tau^{2/\ell}}}
    \leq \frac{1}{\sqrt{1- \tau^{1/(\gamma \log( 2\sqrt{2}/\tau))}}}
    = \frac{1}{\sqrt{1- \left(\tau^{1/\log( 2\sqrt{2}/\tau)}\right)^{1/\gamma }}}
\end{align*}
If $\tau\in[0,1/2]$ then $\tau^{1/\log(2\sqrt{2}/\tau)} \leq \exp(-2/5) < 3/4$ so noting that $\gamma\geq1$ we can apply \cref{thm:sqrtxy_bound} to obtain
\begin{align*}
    \frac{1}{\sqrt{1- \left(\tau^{1/\log( 2\sqrt{2}/\tau)}\right)^{1/\gamma }}}
    \leq 2\gamma.
\end{align*}
Combining this with \cref{thm:minres_CG_residuals1} gives,
\begin{align*}
    \frac{\| \Res_k  \|_2}{\|\Res_0\|_2} 
    = \frac{\| \vec{r}_{k'}^M \|_2/\| \vec{r}_{0}^M\|_2}{\sqrt{1- \left( \| \vec{r}_{k'}^M \|_2 / \| \vec{r}_{k'-1}^M \|_2 \right)^2}}
    \leq \frac{\tau/4}{\sqrt{1- \left( \left(\frac{\tau^2/4}{\tau/4}\right)^{1/\ell} \right)^2}}
    \leq \frac{\tau \gamma}{2} = \epsilon.
\end{align*}
\end{proof}

\begin{lemma}
\label{thm:sqrtxy_bound}
For all $x\in[0,3/4]$ and $y\in[0,1]$,
\begin{align*}
    \frac{1}{\sqrt{1-x^y}} \leq \frac{2}{y} .
\end{align*}
\end{lemma}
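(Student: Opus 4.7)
The plan is to square both sides (legitimate when $y>0$ and $x<1$, so that both sides are positive and finite) and reduce the claim to the polynomial-looking inequality
\[
x^y \;\leq\; 1 - \frac{y^2}{4}, \qquad (x,y)\in[0,3/4]\times[0,1].
\]
The boundary cases $y=0$ (both sides infinite) and $x=0$ (left side equals $1\le 2/y$) are trivial, so I will focus on $y\in(0,1]$ and $x\in(0,3/4]$.

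Next, I would eliminate the dependence on $x$ by monotonicity: for fixed $y>0$, the map $x\mapsto x^y$ is increasing on $[0,\infty)$, so it suffices to verify the inequality at $x=3/4$. The problem then reduces to showing the single-variable inequality
\[
g(y) := 1 - \frac{y^2}{4} - \left(\tfrac{3}{4}\right)^y \;\geq\; 0 \quad \text{for all } y\in[0,1].
\]

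I expect this to be the main (though still very routine) step, because the inequality is actually tight at both endpoints: direct substitution gives $g(0)=0$ and $g(1)=0$. A naive pointwise bound will therefore not suffice, and one must exploit the shape of $g$. I would compute
\[
g'(y) = -\frac{y}{2} + \left(\tfrac{3}{4}\right)^y \ln\!\tfrac{4}{3},
\qquad
g''(y) = -\frac{1}{2} - \left(\tfrac{3}{4}\right)^y \left(\ln\!\tfrac{4}{3}\right)^2 < 0,
\]
so $g$ is strictly concave on $[0,1]$. A strictly concave function that vanishes at both endpoints of an interval is nonnegative on that interval (since $g(ty_1+(1-t)y_0)\ge t\,g(y_1)+(1-t)\,g(y_0)=0$), which gives $g\ge 0$ on $[0,1]$ and completes the proof.

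No step looks genuinely difficult; the only place a careless approach could fail is trying to bound $(3/4)^y$ by a linear function of $y$, since the inequality is tight at both $y=0$ and $y=1$. The concavity argument bypasses this cleanly because it uses only the second-derivative sign together with the two endpoint equalities.
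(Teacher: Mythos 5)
Your proof is correct. The reduction is the same as the paper's first step: both arguments fix $y$, use monotonicity of $x\mapsto x^y$ to push $x$ to $3/4$, and thereby reduce to a single-variable inequality on $[0,1]$. Where you diverge is in how that one-variable inequality is handled. The paper keeps the quantity in ratio form, showing that $y/(2\sqrt{1-(3/4)^y})$ is non-decreasing in $y$ by computing the derivative of its logarithm and invoking the auxiliary bound $(4/3)^y - 1 \geq y\log(4/3)$, so the maximum is attained at $y=1$ where the ratio equals $1$. You instead square and rearrange to $g(y) = 1 - y^2/4 - (3/4)^y \geq 0$, observe $g(0)=g(1)=0$, and conclude from $g''<0$ that the concave function $g$ is nonnegative between its two zeros. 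Your route is slightly more self-contained (one second-derivative sign plus two endpoint evaluations, no auxiliary exponential inequality), and it makes visible why the bound is tight at both endpoints; the paper's monotonicity argument has the mild advantage of working directly with the original ratio, which is the form in which the lemma is applied in \cref{thm:indefinite_CG}. Either way the computations check out: $g'(y) = -y/2 + (3/4)^y\ln(4/3)$ and $g''(y) = -1/2 - (3/4)^y(\ln(4/3))^2 < 0$ are correct, and the degenerate cases $y=0$ and $x=0$ are handled appropriately.
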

\begin{proof}
Consider the function
\begin{align*}
    g(x,y) = \frac{y}{2\sqrt{1-x^y}}.
\end{align*}
For any $y\in[0,1]$, $g(x,y)$ is non-decreasing in $x$, so it suffices to set $x=3/4$.
Thus, define
\begin{align*}
    f(y) = \log(g(3/4,y)) = \log \left( \frac{y}{2\sqrt{1-(3/4)^y}} \right)
\end{align*}
which has derivative
\begin{align*}
    f'(y) = \frac{1}{y} - \frac{\log(4/3)}{2((4/3)^y-1)}.
\end{align*}?
Note that $(4/3)^y-1 \geq \log(4/3)y$ for all $y\geq0$ so 
\begin{align*}
    \frac{\log(4/3)}{2((4/3)^y-1)}
    \leq \frac{\log(4/3)}{2 \log(4/3)y}
    = \frac{1}{2y}.
\end{align*}
Therefore $f'(y) \geq 1/(2y) \geq 0$, so $f(y)$ is non-decreasing. 
Since $\log$ is increasing this implies that $g(3/4,y)$ is a non-decreasing function of $y$ on $[0,1]$ and therefore bounded above by $g(3/4,1) = 1$.
Thus, $g(x,y) \leq 1$ for all $x\in[0,3/4]$ and $y\in[0,1]$ and the result follows.
\end{proof}

So far we have discussed a priori bounds, but there are a range of a posteriori bounds as well.
For instance, a simple a posteriori bound is obtained using the fact that \( \| \err_k \|_{\vec{A}^2} = \| \Res_k    \|_2 \), which holds even when \( \vec{A}\) is indefinite.
Using the similarity of matrix norms, bounds for \( \| \err_k \| \) when \( \| \cdot \| \) is any norm induced by a matrix with the same eigenvectors as \( \vec{A} \) can then be obtained.

When \( \vec{A} \) is positive (or negative) definite, a range of more refined error bounds and estimates for the \( \vec{A} \)-norm and 2-norm have been considered.
These bounds obtain error estimates for CG at step $k$ by running Lanczos (or CG) for an extra $d$ iterations.
The information from this larger Krylov subspace $\mathcal{K}_{k+d}(\vec{A},\vec{b})$ is then used to estimate the error at step $k$.
Typically $d$ can be taken as a small constant, say $d=5$, so the extra work required to obtain these bounds is not too large.
We refer to \cite{strakos_tichy_02,meurant_tichy_18,estrin_orban_saunders_19,meurant_papez_tichy_21} and the references within for more details.

\bibliography{lanczos_error}
\bibliographystyle{siam}

\end{document}